\documentclass[a4paper]{amsart}
\usepackage{amssymb,amsmath,amsthm, amscd, enumerate, mathrsfs}
\usepackage{graphicx}
\usepackage[all]{xy}
\usepackage{tikz}
\usetikzlibrary{intersections, calc}
\usepackage{multicol}

\usepackage{hyperref}

\title[Remarks on bitrational relations of toric Mori fiber spaces]{Remarks on bitrational relations of toric Mori fiber spaces}
\author{Keisuke Miyamoto}
\date{\today, version 0.01}
\subjclass[2010]{Primary 14E30; Secondary 14M25.}
\keywords{toric varieties, Sarkisov program, minimal model program}
\address{Department of Mathematics, Graduate School of Science, 
Osaka University, Toyonaka, Osaka 560-0043, Japan}
\email{u901548b@ecs.osaka-u.ac.jp}

\DeclareMathOperator{\Supp}{Supp}
\DeclareMathOperator{\Exc}{Exc}
\DeclareMathOperator{\Pic}{Pic}

\DeclareMathOperator{\PE}{\overline{Eff}}
\DeclareMathOperator{\codim}{codim}

\newtheorem{thm}{Theorem}[section]
\newtheorem{lem}[thm]{Lemma}
\newtheorem{prop}[thm]{Proposition}

\newtheorem{cor}[thm]{Corollary}

\theoremstyle{definition}

\newtheorem{defn}[thm]{Definition}
\newtheorem{rem}[thm]{Remark}
\newtheorem*{ack}{Acknowledgments}

\newtheorem{notation}[thm]{Notation}
\newtheorem{sym}[thm]{Symbols}
\newtheorem*{nac}{Conventions}

\makeatletter
    
    \@addtoreset{equation}{section}
\makeatother


\begin{document}

\begin{abstract}
We can run the MMP for any divisor on any $\mathbb{Q}$-factorial projective toric variety. 
We show that two Mori fiber spaces, which are outputs of the above MMP, are connected by finitely many elementary transforms. 
\end{abstract}

\maketitle 
\tableofcontents


\section{Introduction}\label{m-sec1}

The minimal model program works for $\mathbb{Q}$-factorial projective toric varieties. 
Namely, for any $\mathbb{Q}$-factorial projective toric variety $X$ and any $\mathbb{R}$-divisor $D$ on $X$, 
one can run the $D$-MMP and it ends up with a minimal model or a Mori fiber space (cf. \cite{fujino-sato}). 

The purpose of this paper is to establish the following theorem. 

\begin{thm}\label{main-theorem0}
Let $Z$ be a $\mathbb{Q}$-factorial projective toric variety 
and let $D_Z$ be an $\mathbb{R}$-divisor on $Z$. 
Let $\phi:X\to S$ and $\psi:Y\to T$ be two Mori fiber spaces, 
which are outputs of the $D_Z$-MMP. 

Then the induced birational map $\sigma:X\dashrightarrow Y$ 
is a composition of Sarkisov links (cf.~Definition \ref{links}). 
\end{thm}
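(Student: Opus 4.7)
The plan is to adapt the Sarkisov program to the toric setting using the Mori chamber decomposition on a common model.

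Choose a $\mathbb{Q}$-factorial projective toric variety $W$ admitting birational toric morphisms $p:W\to X$ and $q:W\to Y$; such $W$ exists by taking a suitable $\mathbb{Q}$-factorial projective refinement of the common refinement of the fans of $X$ and $Y$, which share a lattice because both are obtained from $Z$ via toric MMP. Because toric varieties are Mori dream spaces, $\PE(W)$ admits a finite Mori chamber decomposition, and the Mori fiber space structures $\phi$ and $\psi$ correspond to distinguished faces $F_X, F_Y$ of this decomposition (witnessed, for instance, by $p^*\phi^*A_S$ and $q^*\psi^*A_T$ for ample classes $A_S, A_T$ on $S, T$).

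Choose a generic path $\gamma:[0,1]\to \PE(W)$ starting at a general point of $F_X$ and ending at a general point of $F_Y$. By genericity and the finiteness of the chamber decomposition, $\gamma$ crosses only finitely many walls, at critical values $0=s_0<s_1<\cdots<s_n=1$. For each $t\in (s_{i-1},s_i)$, a suitable MMP on $W$ for the $\mathbb{R}$-divisor class $\gamma(t)$ produces a fixed Mori fiber space $X_i\to S_i$, yielding a chain $X=X_0, X_1,\ldots, X_n=Y$ of intermediate Mori fiber space outputs.

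It remains to verify that each transition $X_{i-1}\dashrightarrow X_i$ at the wall $\gamma(s_i)$ is a Sarkisov link in the sense of Definition \ref{links}. At such a wall, exactly one extremal ray becomes numerically $\gamma(s_i)$-trivial (by genericity of $\gamma$), and the resulting 2-ray game resolves combinatorially on the toric fans into one of the four standard link types. The principal obstacle is the case-by-case identification of each wall-crossing with a specific link type, together with the verification that the endpoints of the interpolation recover $\phi$ and $\psi$ precisely (and not some other birational Mori fiber space model); in the toric setting both reduce to finite combinatorial bookkeeping using the fan structure and the toric MMP of \cite{fujino-sato}.
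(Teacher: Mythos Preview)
Your sketch has the right general shape (chamber decomposition, wall-crossing, 2-ray game), but there is a genuine gap in the path argument. A generic path $\gamma$ in $\PE(W)$ between the faces $F_X$ and $F_Y$ will pass through the \emph{interior} of $\PE(W)$, where classes are big; running the MMP for a big class $\gamma(t)$ terminates in a minimal model birational to $W$, not a Mori fiber space. So your claim that ``a suitable MMP on $W$ for $\gamma(t)$ produces a fixed Mori fiber space $X_i\to S_i$'' is false as stated. To obtain Mori fiber spaces at every step one must trace along the \emph{non-big} boundary of the effective locus, and this boundary need not be connected in an arbitrary slice of $N^1(W)$; arranging connectedness is exactly the content of condition (5) in the paper's Lemma~\ref{space}, and it requires a careful choice of a two-dimensional rational polytope $\mathcal{B}$ (using $\codim V=\rho\geq 2$), not just a generic path.

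The paper's proof differs from yours also in that it works on $Z$ itself rather than on a common resolution $W$, and it deliberately cuts down to a two-dimensional polytope $\mathcal{B}\subset\mathcal{V}(Z)$ satisfying Propositions~\ref{span} and~\ref{picard}. The two-dimensionality is what makes the wall-crossing analysis tractable: each boundary vertex $D_Z^\dag$ meets only finitely many two-dimensional chambers $\mathcal{C}_{f_i}$, and Proposition~\ref{picard} forces $\rho(X_i/R)\leq 2$, which is precisely what feeds the 2-ray game and the case analysis of Theorem~\ref{links} identifying the link types (I)--(IV). Your appeal to ``finite combinatorial bookkeeping using the fan structure'' hides this step; without the reduction to a two-dimensional slice and the relative Picard number bound, there is no reason a wall-crossing should be governed by a rank-two situation at all. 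Finally, Definition~\ref{links} requires the intermediate spaces $X'$, $Y'$, $R$ to be ample models of divisors on $Z$, so the links must be tied back to $Z$; the paper does this via Lemma~\ref{result}, showing each $f_i$ is a result of the $D_Z$-MMP because $D_i-D_Z$ is ample, another point your sketch does not address.
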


Let $X$ be a $\mathbb{Q}$-factorial projective toric variety and 
let $D$ be a Weil divisor on $X$. Then there exists a positive integer $m$ 
such that $mD$ is linearly equivalent to a torus-invariant Cartier divisor on $X$. 
Hence any $\mathbb{R}$-divisor on $X$ is $\mathbb{R}$-linearly equivalent to 
a torus-invariant $\mathbb{R}$-Cartier divisor. 
Therefore, it is sufficient to prove the following statement for Theorem \ref{main-theorem0}. 

\begin{thm}\label{main-theorem}
Let $Z$ be a $\mathbb{Q}$-factorial projective toric variety 
and let $D_Z$ be a torus-invariant $\mathbb{R}$-divisor on $Z$. 
Let $\phi:X\to S$ and $\psi:Y\to T$ be two Mori fiber spaces, 
which are outputs of the $D_Z$-MMP. 

Then the induced birational map $\sigma:X\dashrightarrow Y$ 
is a composition of Sarkisov links. 
\end{thm}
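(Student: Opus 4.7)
The plan is to adapt the variational minimal model program, in the spirit of the Corti--Hacon--McKernan Sarkisov program, to the toric setting: I would connect $\phi: X \to S$ to $\psi: Y \to T$ through a one-parameter family of $\mathbb{R}$-divisors on $X$ and show that the MMP output changes only at finitely many critical parameter values, each crossing producing a single Sarkisov link.

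Concretely, I would first choose a torus-invariant $\mathbb{R}$-Cartier divisor $A$ on $Y$ which is nef and trivial on the fibers of $\psi$ (for instance, the pullback of an ample divisor on $T$). Since $\sigma$ is torus-equivariant, its strict transform $A^X := (\sigma^{-1})_{*}A$ is torus-invariant on $X$, hence $\mathbb{R}$-Cartier by $\mathbb{Q}$-factoriality. Letting $D^X$ denote the strict transform of $D_Z$ on $X$, consider the family
\[
D_t \;=\; D^X + t\, A^X, \qquad t \in [0,\infty).
\]
At $t=0$ the $D_0$-MMP terminates tautologically at $\phi: X \to S$, since $D^X$ is $\phi$-anti-ample by the hypothesis that $\phi$ is the $D_Z$-Mori fiber space. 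For $t$ sufficiently large, the $D_t$-MMP lies in the Mori chamber of $\psi: Y \to T$, because for $t$ large the divisor $D_t$ behaves like $t A^X$, and the $A^X$-MMP on $X$ terminates at $Y$ with the fibration $\psi$ by construction of $A$.

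Next I would track how the $D_t$-MMP output depends on $t$. The toric hypothesis is crucial here: the pseudo-effective cone $\PE(X)$ is rational polyhedral and admits a finite Mori chamber decomposition, by finite generation of the Cox ring of $X$ (equivalently, by the GKZ secondary fan structure on the space of torus-invariant divisors). The ray $\{D_t : t \geq 0\}$ therefore crosses only finitely many walls, at critical values $0 < t_1 < \cdots < t_n$. For $t$ in an open chamber the $D_t$-MMP output is locally constant, and at each $t_i$ a two-ray game argument identifies the transition between the two adjacent outputs as a Sarkisov link in the sense of Definition~\ref{links}.

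The principal obstacle will be the wall-crossing analysis: one must verify that at every critical $t_i$ the two adjacent Mori chambers differ precisely by the combinatorial data of a Sarkisov link of type I, II, III, or IV, depending on whether each side is a divisorial or a small contraction and on whether the fibration structure is preserved or reverses direction. In the toric setting, this reduces to tracking the change of the fan of $X_t$ as $t$ crosses $t_i$, which is finite and combinatorial; finiteness of the entire decomposition of $\sigma$ into Sarkisov links then follows from the finiteness of the Mori chamber decomposition.
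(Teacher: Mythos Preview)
There is a genuine gap. Your scheme runs the MMP on $X$ and asserts that for $t\gg 0$ the output of the $D_t$-MMP on $X$ is $\psi:Y\to T$. But every step of the MMP is a birational \emph{contraction}, so any output of any MMP started on $X$ is the target of a birational contraction from $X$. The map $\sigma:X\dashrightarrow Y$, however, need not be a contraction: if $g:Z\dashrightarrow Y$ contracts some prime divisor $E$ that $f:Z\dashrightarrow X$ does not, then $f_*E$ is a divisor on $X$ whose strict transform on $Y$ is a divisor, yet $\sigma^{-1}$ contracts a divisor (the exceptional divisor of $g$ reappearing on $X$). In such cases no Mori chamber of $N^1(X)$ corresponds to $Y$ at all, and your ray $\{D_t\}$ can never land there. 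This is exactly why Sarkisov links of Types (I) and (II) begin with a divisorial \emph{extraction} $p:X'\to X$: one must leave the category of contractions of $X$.

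The paper avoids this by working on $Z$, which dominates both $X$ and $Y$ via the birational contractions $f$ and $g$; all the relevant models (including the $X'$ appearing in links of Types (I) and (II)) arise as ample models of divisors on $Z$. Concretely, it builds a carefully chosen two-dimensional rational polytope $\mathcal{B}\subset\mathcal{V}(Z)$ such that $\mathcal{C}_f$ and $\mathcal{C}_g$ are both $2$-dimensional and meet the boundary of $\mathcal{E}(\mathcal{B})$ along $\mathcal{C}_{\phi\circ f}$ and $\mathcal{C}_{\psi\circ g}$; one then walks along the non-big part of $\partial\mathcal{E}(\mathcal{B})$ from one to the other. A single line would not suffice even on $Z$: the Sarkisov link at each special point $D^\dagger_Z$ is read off from the full collection of $2$-dimensional chambers $\mathcal{C}_{f_1},\ldots,\mathcal{C}_{f_k}$ meeting at that point, together with the relative Picard number computation $\rho(X_i/R)=\dim\mathcal{C}_{f_i}-\dim(\mathcal{C}_h\cap\mathcal{C}_{f_i})$, which is what forces the $2$-ray-game structure.
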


\begin{cor}\label{sub-theorem1}
Let $(Z, \Phi)$ be a $\mathbb{Q}$-factorial projective toric lc pair. 
Let $\phi:X\to S$ and $\psi:Y\to T$ be two Mori fiber spaces, 
which are outputs of the $(K_Z+\Phi)$-MMP. 

Then the induced birational map $\sigma:X\dashrightarrow Y$ 
is a composition of Sarkisov links. 
\end{cor}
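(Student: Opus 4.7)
The plan is to deduce Corollary \ref{sub-theorem1} as an immediate consequence of Theorem \ref{main-theorem0}. First, since $(Z, \Phi)$ is a $\mathbb{Q}$-factorial projective toric lc pair, the log canonical divisor $K_Z + \Phi$ is a well-defined $\mathbb{R}$-divisor on $Z$. Setting $D_Z := K_Z + \Phi$, we observe that by the very definition of the $(K_Z + \Phi)$-MMP, contracting $(K_Z + \Phi)$-negative extremal rays is exactly the same as contracting $D_Z$-negative extremal rays. Hence the Mori fiber spaces $\phi : X \to S$ and $\psi : Y \to T$ that arise as outputs of the $(K_Z+\Phi)$-MMP are in particular outputs of the $D_Z$-MMP in the sense of Theorem \ref{main-theorem0}.

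Once this identification is in place, Theorem \ref{main-theorem0} applies directly and yields that the induced birational map $\sigma : X \dashrightarrow Y$ is a composition of Sarkisov links. Observe that the lc hypothesis on $(Z,\Phi)$ plays no role in the deduction itself: Theorem \ref{main-theorem0} holds for \emph{any} $\mathbb{R}$-divisor $D_Z$ on $Z$, so the corollary is really only a repackaging of the main theorem in terms more familiar to the classical log MMP. The lc assumption is kept because the $(K_Z+\Phi)$-MMP for log canonical pairs is the setting of most applications, and it is comforting to know that all of its outputs are bi-rationally related by Sarkisov links.

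Since the proof reduces to a one-line invocation of Theorem \ref{main-theorem0}, there is no substantive obstacle. The only minor point worth checking is that the toric MMP for a (torus-invariant) $\mathbb{R}$-divisor, as developed in \cite{fujino-sato}, agrees with the classical $(K_Z+\Phi)$-MMP when applied to $D_Z = K_Z+\Phi$; this is immediate from how both programs are defined, since in either case one contracts precisely the extremal rays on which the chosen divisor is negative.
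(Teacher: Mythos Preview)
Your proposal is correct and takes essentially the same approach as the paper: set $D_Z = K_Z + \Phi$ and invoke Theorem~\ref{main-theorem0} (equivalently Theorem~\ref{main-theorem}, since for a toric pair $\Phi$---and hence $K_Z+\Phi$---is torus-invariant). The paper's own justification is the one-line remark that the corollary ``immediately follows from Proposition~\ref{sing2}'', which is just the observation that $K_Z+\Phi$ is a torus-invariant $\mathbb{R}$-divisor, so the main theorem applies directly.
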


We define lc toric pairs in Definition \ref{sing}. 
Corollary \ref{sub-theorem1} immediately follows from Proposition \ref{sing2}. 

\begin{cor}\label{terminal}
Let $Z$ be a $\mathbb{Q}$-factorial projective toric variety with terminal singularities. 
Let $\phi:X\to S$ and $\psi:Y\to T$ be two Mori fiber spaces, 
which are outputs of the $K_Z$-MMP. 

Then the induced birational map $\sigma:X\dashrightarrow Y$ 
is a composition of Sarkisov links. 
\end{cor}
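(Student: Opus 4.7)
The plan is to deduce this as an essentially immediate specialization of Theorem \ref{main-theorem}. On any normal toric variety $Z$, the canonical divisor has the preferred torus-invariant representative $K_Z = -\sum_\rho D_\rho$, where the sum runs over the torus-invariant prime divisors (equivalently, over the rays of the defining fan). Because $Z$ is $\mathbb{Q}$-factorial, every Weil divisor, and in particular this representative of $K_Z$, is automatically $\mathbb{Q}$-Cartier, hence $\mathbb{R}$-Cartier. Thus $K_Z$ is genuinely a torus-invariant $\mathbb{R}$-divisor in the sense required by Theorem \ref{main-theorem}, with no need to invoke the reduction discussed in the paragraph preceding that theorem.

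With this identification in hand, I would set $D_Z := -\sum_\rho D_\rho$ and note that the $K_Z$-MMP on $Z$ is literally the $D_Z$-MMP. In particular, the hypothesis that $\phi:X\to S$ and $\psi:Y\to T$ are Mori fiber space outputs of the $K_Z$-MMP is the same as saying they are Mori fiber space outputs of the $D_Z$-MMP on the $\mathbb{Q}$-factorial projective toric variety $Z$. Applying Theorem \ref{main-theorem} to this data then yields directly that the induced birational map $\sigma: X \dashrightarrow Y$ is a composition of Sarkisov links. Alternatively, one may invoke Corollary \ref{sub-theorem1} with $\Phi=0$, using that terminal singularities are in particular lc so that $(Z,0)$ is a $\mathbb{Q}$-factorial projective toric lc pair.

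There is essentially no obstacle to be navigated: the only point requiring comment is the existence of a torus-invariant representative of $K_Z$, which is classical toric geometry. The terminal hypothesis itself plays no real role in the deduction; it appears in the statement only to single out a geometrically natural class of toric varieties on which running the $K_Z$-MMP is the standard operation. In this sense Corollary \ref{terminal} is a clean formal consequence of Theorem \ref{main-theorem} rather than a substantive additional result.
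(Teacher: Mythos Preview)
Your proposal is correct and matches the paper's approach: the paper gives no separate proof of Corollary \ref{terminal}, treating it as an immediate specialization of Theorem \ref{main-theorem} (equivalently, of Corollary \ref{sub-theorem1} with $\Phi=0$), exactly as you argue via the torus-invariant representative $K_Z=-\sum_\rho D_\rho$.
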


Corollary \ref{terminal} was first established by Matsuki and Sharmov (cf.~\cite[Chapter 14]{matsuki} and ~\cite{acc}). 
This proof is based on the original idea by Sarkisov (cf.~\cite{corti}). 
In their proof, we keep track of three invariants, 
called the Sarkisov degree, associated with the singularities 
and we need to check that the Sarkisov degree satisfies the ascending chain condition. 
This heavily depends on a detailed study of the singularities. 
On the other hand, our approach is quite different from this. 
We use ``the geography of models'' instead of the Sarkisov degree. 
We remark that we can not use the traditional approach by Corti and Matsuki 
as we treat (not necessarily effective) divisors in this paper. 
We note that this idea is based on \cite{hacon-mckernan} and \cite{shokurov}. 

At first, we introduce the notation of Sarkisov links. 

\begin{defn}[Sarkisov links]\label{links}
Let $Z$ be a $\mathbb{Q}$-factorial projective toric variety 
and let $D_Z$ be an $\mathbb{R}$-divisor on $Z$. 
Let $\phi:X\to S$ and $\psi:Y\to T$ be two Mori fiber spaces, 
which are outputs of the $D_Z$-MMP. 

The induced birational map $\sigma:X\dashrightarrow Y$ between $\phi$ and $\psi$ is called 
a {\it Sarkisov link} if it is one of the following four types: 
\begin{multicols}{2}
Type (I)
$$
\xymatrix{
X'\ar@{-->}[r]\ar[d]_p&Y\ar[d]^\psi \\
X\ar[d]_\phi&T\ar[ld] \\
\text{S.}
}
$$
Type (III)
$$
\xymatrix{
X\ar@{-->}[r]\ar[d]_\phi&Y'\ar[d]^q \\
S\ar[rd]&Y\ar[d]^\psi \\
&\text{T.}
}
$$
Type (II)
$$
\xymatrix{
X'\ar@{-->}[r]\ar[d]_p&Y'\ar[d]^q \\
X\ar[d]_\phi&Y\ar[d]^\psi \\
S\ar@{=}[r]&T. 
}
$$
Type (IV)
$$
\xymatrix{ 
X\ar@{-->}[rr]\ar[d]_\phi&&Y\ar[d]^\psi \\
S\ar[rd]_s&&T\ar[ld]^t \\
&\text{R.}
}
$$
\end{multicols}

In the above commutative diagram, the vertical arrows $p$ and $q$ are divisorial contractions 
and the horizontal dotted arrows are compositions of finitely many flops for the $D'_Z$-MMP, 
where $D'_Z$ is an $\mathbb{R}$-divisor on the top left space, that is, $X'$ or $X$. 
The spaces $X'$, $Y'$ and $R$ are realized as the ample models of $\mathbb{R}$-divisors on $Z$ 
(cf.~Definition \ref{am} for definition of ample models). 
In links of Type (I)-(III), $R$ is $\mathbb{Q}$-factorial. 
Moreover, links of Type (IV) are separated into two types: (IV$_m$) and (IV$_s$). 
In a link of Type (IV$_m$), $s$ and $t$ have both Mori fiber structures and $R$ is $\mathbb{Q}$-factorial. 
In a link of Type (IV$_s$), $s$ and $t$ are small birational contractions and $R$ is not $\mathbb{Q}$-factorial. 
We note that a link of Type (IV$_s$) does not appear for $\dim Z\leq3$. 
\end{defn}

\begin{rem}
For toric $3$-folds with terminal singularities, links of Type (I)-(IV$_m$) are completely classified and 
we can find various examples (cf.~\cite{shramov}). 

Next, there is an example of a link of Type (IV$_s$). 
Let $S\to R\leftarrow T$ be a flop for a divisor and we put $X=S\times\mathbb{P}^1$ and $Y=T\times\mathbb{P}^1$. 
Then here is a link of Type (IV$_s$). 
\end{rem}

The contents of this paper are as follows: 
In Section $2$, we quickly recall some basic definitions and properties. 
In Section $3$, we prove Theorem \ref{main-theorem}. 
We remark that all varieties appeared in this section are $\mathbb{Q}$-factorial, projective and toric. 

In this papaer, we will work over an arbitrary algebraically closed field. 

\begin{ack}
The author would like to thank Professor Osamu Fujino 
for various suggestions and warm encouragement. 
The author was partially supported by JSPS 
KAKENHI Grant Number 20J20070. 
\end{ack}


\section{Preliminaries}\label{m-sec2} 

\begin{nac}
A {\it contraction morphism} is a proper morphism $g:X\to Y$ with $g_*\mathcal{O}_X=\mathcal{O}_Y$. 
If $X$ and $Y$ are both normal, 
the condition above is equivalent to the one that $g$ is a surjective morphism with connected fibers. 
A {\it rational contraction} is a rational map $g':X\dasharrow Y'$ if there is a common resolution $p:W\to X$ and $q:W\to Y$ 
which are contraction morphisms. 
A birational map $f:X\dasharrow Z$ is a {\it birational contraction} 
if $f$ is proper and $f^{-1}$ does not contract any divisors. 
We say that $f$ is {\it small} if $f$ and $f^{-1}$ are both birational contractions. 

Let $\mathcal{C}$ be a polytope of a finite-dimensional $\mathbb{R}$-vector space. 
The {\it span} of $\mathcal{C}$ is the span of $\mathcal{C}$ as an affine subspace. 
The {\it relative interior} of $\mathcal{C}$ is the interior of $\mathcal{C}$ 
in the affine space spanned by $\mathcal{C}$. 

We say that an $\mathbb{R}$-vector space $\mathcal{V}_0$  {\it is defined over $\mathbb{Q}$}
if there is a $\mathbb{Q}$-vector space $\mathcal{V}'$ 
such that $\mathcal{V}_0=\mathcal{V}'\otimes_\mathbb{Q}\mathbb{R}$. 
We say that an affine subspace $\mathcal{H}$ of an $\mathbb{R}$-vector space $\mathcal{V}_0$, 
which is defined over  $\mathbb{Q}$,  {\it is defined over  $\mathbb{Q}$} 
if $\mathcal{H}$ is spanned by $\mathbb{Q}$-vectors of $\mathcal{V}_0$. 
\end{nac}

\begin{defn}\label{toric varieties}
Let $N\simeq\mathbb{Z}^n$ be a lattice of rank $n$. 
A {\it toric variety} $X(\Delta)$ is associated to a fan $\Delta$, a finite collection of convex cones 
$\sigma\subset N_{\mathbb{R}}=N\otimes_\mathbb{Z}\mathbb{R}$ satisfying: 
\begin{itemize}
\item[(i)] Each convex cone $\sigma$ is rational polyhedral in the sense that there are finitely many 
$v_1, \ldots, v_s\in N\subset N_\mathbb{R}$ such that
\[
\sigma=\{r_1v_1, \ldots, r_sv_s|r_i\in\mathbb{R}_{\geq0} \text{ for all }i\}
\]
and it is strongly convex in the sense 
\[
\sigma\cap-\sigma=\{0\}. 
\]
\item[(ii)] Each face $\tau$ of a convex cone $\sigma\in\Delta$ is again contained in $\Delta$. 
\item[(iii)] The intersection of two cones in $\Delta$ is a face of each. 
\end{itemize}
\end{defn}

\begin{defn}[Relative Picard numbers]
Let $f:X\to Y$ be a proper morphism between normal varieties. 
We define 
\[
N^{1}(X/Y)=\{\Pic(X)/\equiv_Y\}\otimes\mathbb{R}
\]
and
\[
N_{1}(X/Y)=\{Z_1(X/Y)/\equiv_Y\}\otimes\mathbb{R}, 
\]
where $Z_1(X/Y)$ is the free abelian group of $1$-cycles of $X$ over $Y$. 
These are inducing the following non-degenerate bilinear pairing: 
\[
N^{1}(X/Y)\times N_{1}(X/Y)\to \mathbb{R}. 
\]
It is well-known that $N^{1}(X/Y)$ and $N_{1}(X/Y)$ are finite-dimensional $\mathbb{R}$-vector spaces. 
We write 
\[
\rho(X/Y)=\dim_\mathbb{R}N^{1}(X/Y)=\dim_\mathbb{R}N_{1}(X/Y)
\]
and call it the {\it relative Picard number} of $X$ over $Y$. 
We write $\rho(X)=\rho(X/Y)$ and $N^{1}(X)=N^{1}(X/Y)$ when $Y$ is a point. 
We simply call $\rho(X)$ the {\it Picard number} of $X$. 

If $f$ is a surjective morphism of projective torc varieties with connected fibers, 
then 
\[
\rho(X/Y)=\rho(X)-\rho(Y) 
\]
by \cite[Theorem 6.3.12]{toric-book}. 
For details, see \cite[Lemma 3-2-5 (3)]{kmm}. 
\end{defn}

\begin{defn}[Singularities of pairs]\label{sing}
Let $X$ be a normal variety and $D\geq0$ be an $\mathbb{R}$-divisor on $X$. 
We say that $(X, D)$ is a {\it pair} if $K_X+D$ is $\mathbb{R}$-Cartier. 
In addition, we say that a pair $(X, D)$ is {\it toric} if $X$ is toric and $D$ is consisting of torus-invariant divisors. 

Let $(X, D)$ be a pair. 
Let $f:Y\to X$ be a proper birational morphism from a normal variety $Y$. 
Then we can write 
\[K_Y=f^*(K_X+D)+\sum a_iE_i.\]
We say that $(X, D)$ is {\it klt} (resp.~{\it lc}) 
if $a_i>-1$ (resp.~$a_i\geq-1$) for any $f$ and $i$. 
We say that $X$ is {\it $\mathbb{Q}$-factorial} if every Weil divisor on $X$ is $\mathbb{Q}$-Cartier. 
In addition, we say that a pair $(X, D)$ is {\it $\mathbb{Q}$-factorial} if so is $X$. 

For any subset $S\subset\mathbb{R}$ and any $\mathbb{R}$-divisor $D'$ on $X$, 
we denote $D'\in S$ if all coefficients of the prime decomposition for $D'$ is contained in $S$. 
\end{defn}

\begin{rem}
For any normal toric variety $X$ 
and any torus-invariant $\mathbb{R}$-divisor $D$ on $X$, 
there is a log resolution $f: Y\to X$ of $D$. 
More precisely, $f$ is a projective birational morphism from a smooth toric variety $Y$ such that 
$\Exc(f)\cup f^{-1}(\Supp D)$ is an SNC divisor. 
For details, see \cite[Chapter 11]{toric-book}. 
\end{rem}

The following lemma is the combinatorial characterization of $\mathbb{Q}$-factoriality. 

\begin{lem}{\em (cf.~\cite[Proposition 4.2.7]{toric-book})}
Let $X=X(\Delta)$ be a toric variety. 
Then $X$ is $\mathbb{Q}$-factorial if and only if each of $\sigma\in\Delta$ is simplicial. 
\end{lem}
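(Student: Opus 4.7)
The plan is to translate $\mathbb{Q}$-factoriality of $X=X(\Delta)$ into a linear-algebra condition on the primitive generators of the cones of $\Delta$. First, I would invoke the standard exact sequence
\[
M \longrightarrow \bigoplus_{\rho\in\Delta(1)}\mathbb{Z}D_\rho \longrightarrow \operatorname{Cl}(X) \longrightarrow 0
\]
for a toric variety, which shows that every Weil divisor on $X$ is linearly equivalent to a torus-invariant one. Hence $X$ is $\mathbb{Q}$-factorial if and only if every torus-invariant Weil divisor $D=\sum_{\rho\in\Delta(1)}a_\rho D_\rho$ is $\mathbb{Q}$-Cartier.

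Next, I would apply the local criterion for Cartier-ness of torus-invariant divisors: $D$ is $\mathbb{Q}$-Cartier if and only if for every maximal cone $\sigma\in\Delta$ there exists $m_\sigma\in M_{\mathbb{Q}}$ such that $\langle m_\sigma,u_\rho\rangle=-a_\rho$ for every ray $\rho\in\sigma(1)$, where $u_\rho$ denotes the primitive generator of $\rho$. As the coefficients $(a_\rho)_{\rho\in\sigma(1)}$ range over $\mathbb{Q}^{|\sigma(1)|}$, such an $m_\sigma$ exists for every choice if and only if the evaluation map
\[
\Phi_\sigma\colon M_{\mathbb{Q}}\longrightarrow \mathbb{Q}^{|\sigma(1)|},\qquad m\mapsto(\langle m,u_\rho\rangle)_{\rho\in\sigma(1)},
\]
is surjective, which is in turn equivalent to linear independence of $\{u_\rho\mid\rho\in\sigma(1)\}$ in $N_{\mathbb{R}}$.

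Finally, I would use the elementary fact that a strongly convex rational polyhedral cone is simplicial precisely when its minimal ray generators are linearly independent. Applying the previous equivalence to every maximal cone of $\Delta$ then gives the lemma; it suffices to check maximal cones, because an $m_\sigma$ witnessing $\mathbb{Q}$-Cartier-ness on $\sigma$ automatically works for every face $\tau\prec\sigma$, and conversely faces of simplicial cones are simplicial. The only substantive step is the equivalence between surjectivity of $\Phi_\sigma$ and linear independence of the $u_\rho$; the rest is direct unpacking of definitions, so the main obstacle is really just being careful about this linear-algebra reformulation and its global-to-local passage.
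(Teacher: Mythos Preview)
Your argument is correct and is essentially the standard proof (the one given in the cited reference). Note, however, that the paper does not supply its own proof of this lemma at all: it is stated with a citation to \cite[Proposition 4.2.7]{toric-book} and left at that. So there is nothing to compare against; your outline simply fills in what the paper delegates to the textbook, via the expected route of reducing to torus-invariant divisors through the presentation of $\operatorname{Cl}(X)$, rewriting $\mathbb{Q}$-Cartier-ness as solvability of the linear systems $\langle m_\sigma,u_\rho\rangle=-a_\rho$ on each cone, and identifying universal solvability with linear independence of the ray generators.
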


The following lemma is a well-known criteria for singularities of toric pairs. 

\begin{prop}{\em (cf.~\cite[Proposition 11.4.24]{toric-book})}\label{sing2}
Let $(X=X(\Delta), D)$ be a toric pair. 
If $D\in[0, 1]$, then $(X, D)$ is lc. 
In addition if $D\in[0, 1)$,  then it is klt. 
\end{prop}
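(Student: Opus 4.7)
The plan is to verify the discrepancy inequalities on a single toric log resolution. By the remark preceding the proposition, a projective toric log resolution $f : Y = X(\Delta') \to X = X(\Delta)$ exists; and it is standard that if the discrepancies of all exceptional prime divisors as well as of the strict transforms of the components of $D$ are $\geq -1$ (resp.\ $> -1$) on one log resolution, then the same inequalities hold on every birational model, so the lc (resp.\ klt) condition can be checked on $f$ alone.

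Write $D = \sum_{\rho \in \Delta(1)} d_\rho D_\rho$, where $D_\rho$ is the torus-invariant prime divisor attached to the ray $\rho$ and $d_\rho$ lies in $[0,1]$ (resp.\ $[0,1)$). Combined with the standard formula $K_X = -\sum_{\rho \in \Delta(1)} D_\rho$, the divisor $K_X+D$ is encoded by the piecewise linear support function $\varphi$ on $|\Delta|$ determined by $\varphi(u_\rho) = d_\rho - 1$, where $u_\rho$ denotes the primitive generator of $\rho$. For each ray $\rho'$ of $\Delta'$, the primitive generator $u_{\rho'}$ lies in some cone $\sigma \in \Delta$ and can be expressed as $u_{\rho'} = \sum_{\rho \in \sigma(1)} \lambda_\rho u_\rho$ with $\lambda_\rho \geq 0$. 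Comparing with $K_Y = -\sum_{\rho' \in \Delta'(1)} D_{\rho'}$, the discrepancy of $D_{\rho'}$ in $K_Y - f^*(K_X + D)$ is
\[
a_{\rho'} \;=\; -1 + \sum_{\rho \in \sigma(1)} \lambda_\rho\,(1 - d_\rho).
\]

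If every $d_\rho \in [0,1]$, each summand $\lambda_\rho(1-d_\rho)$ is nonnegative, so $a_{\rho'} \geq -1$; this gives the lc assertion. If every $d_\rho \in [0,1)$, then $1 - d_\rho > 0$ strictly. For a ray $\rho'$ of $\Delta'$ already present in $\Delta$ the formula collapses to $a_{\rho'} = -d_{\rho'} > -1$, and for a genuinely new ray the vector $u_{\rho'}$ is nonzero, so at least one $\lambda_\rho$ is strictly positive and again $a_{\rho'} > -1$. The main, though modest, obstacle is the initial reduction to a single log resolution, since the paper's definition of lc/klt quantifies over all birational morphisms $f$; once that standard reduction is in place, the rest is routine bookkeeping with support functions on the fan.
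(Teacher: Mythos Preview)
The paper does not supply its own proof of this proposition; it is stated with a reference to \cite[Proposition~11.4.24]{toric-book} and left at that. Your argument is correct and is precisely the standard computation one finds in that reference: pull back $K_X+D$ via the support function, express an exceptional primitive generator as a nonnegative combination of the ray generators of the cone containing it, and read off the discrepancy $a_{\rho'}=-1+\sum_{\rho}\lambda_\rho(1-d_\rho)$. One small remark: when the cone $\sigma$ containing $u_{\rho'}$ is not simplicial the coefficients $\lambda_\rho$ need not be unique, but since $(X,D)$ is a pair the divisor $K_X+D$ is $\mathbb{R}$-Cartier, so the support function is genuinely linear on $\sigma$ and the value $\sum_\rho \lambda_\rho(d_\rho-1)$ is independent of the chosen expression; you implicitly use this and it may be worth saying explicitly.
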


\begin{defn}[Ample models]\label{am}
Let $X$ be a normal projective variety and let $D$ be an $\mathbb{R}$-Cartier divisor on $X$. 
Then a rational contraction $g:X\dasharrow Y$ is the {\it ample model} of $D$ if 
\begin{itemize}
\item $Y$ is normal and projective and 
\item there is an ample $\mathbb{R}$-Cartier divisor $H$ such that 
if $p:W\to X$ and $q:W\to Y$ are a common resolution, then 
we can write $p^*D\sim_{\mathbb{R}}q^*H+E$, 
where $E\geq0$, 
then $B\geq E$ for any $B\in|p^*D|_{\mathbb{R}}$. 
\end{itemize}
\end{defn}

For the basic properties of ample models, see \cite[Lemma 3.6.6]{bchm}. 

\begin{rem}
If $f$ is birational, then $E$ is $q$-exceptional. 
If $X$ is toric and $D$ is pseudo-effective, then we can construct the ample model of $D$ in toric geometry. 
By the uniqueness of ample models, $Y$ is always toric. 
We remark that in Section $3$, we always assume that $D$ is pseudo-effective when we treat ample models. 
\end{rem}

\begin{defn}
Let $f:X\dasharrow Y$ be a proper birational contraction of normal varieties and
let $D$ be an $\mathbb{R}$-Cartier divisor on $X$ such that $f_*D$ is also $\mathbb{R}$-Cartier. 
Then we say that $f$ is {\it $D$-non-positive} (resp.~{\it $D$-negative}) 
if there is a common resolution $p:W\to X$ and $q:W\to Y$ such that 
\[
p^*D=q^*f_*D+E, 
\]
where $E\geq0$ is $q$-exceptional (resp.~$E\geq0$ is $q$-exceptional and 
whose support contains the strict transform of the $f$-exceptional divisors). 
\end{defn}

We close this section with definition of minimal models and Mori fiber spaces. 

\begin{defn}\label{models}
Let $f:X\dasharrow Y$ be a birational contraction of normal projective varieties 
and let $D$ be an $\mathbb{R}$-Cartier divisor on $X$ such that $f_*D$ is also $\mathbb{R}$-Cartier. 

We say that $f$ is a {\it weak log canonical model} of $D$ if 
\begin{itemize}
\item $f$ is $D$-non-positive and 
\item $f_*D$ is nef. 
\end{itemize}

We say that $f$ is a {\it minimal model} of $D$ if 
\begin{itemize}
\item $Y$ is $\mathbb{Q}$-factorial, 
\item $f$ is $D$-negative and 
\item $f_*D$ is nef. 
\end{itemize}

Let $\phi:X\to S$ be a contraction morphism of normal projective varieties. 
We say that $\phi$ is a {\it Mori fiber space} of $D$ if 
\begin{itemize} 
\item $X$ is $\mathbb{Q}$-factorial, 
\item $-D$ is $\phi$-ample, 
\item $\rho(X/S)=\rho(X)-\rho(S)=1$ and 
\item $\dim S<\dim X$. 
\end{itemize} 
We say that $\phi$ has a {\it Mori fiber structure} if $\phi$ is a Mori fiber space of some $\mathbb{R}$-Cartier divisor. 

We say that $f$ is the {\it output} of the $D$-MMP 
if $f$ is a minimal model of $D$ 
or a Mori fiber space of $D$. 
On the other hand, we say that $f$ is the {\it result} of running the $D$-MMP 
if $f$ is any sequence of divisorial contractions and flips for the $D$-MMP. 
We emphasize that the result of running the $D$-MMP 
is not necessarily a minimal model of $D$ 
or a Mori fiber space of $D$. 
\end{defn} 


\section{Proof of Theorem \ref{main-theorem}}\label{m-sec3}

In this section, we will closely follow \cite[Section 3, 4]{hacon-mckernan}. 

\begin{sym}
Let $Z$ be a $\mathbb{Q}$-factorial projective toric variety. 
\begin{itemize}
\item $\mathcal{V}(Z)$ is the $\mathbb{R}$-vector space generated by all torus-invariant prime divisors on $Z$.
\end{itemize}
Let $\mathcal{B}$ be a convex polytope in $\mathcal{V}(Z)$. 
Let $f:Z\dasharrow X$ be a birational contraction to a normal projective variety $X$  
and let $g:Z\dasharrow Y$ be a rational contraction to a normal projective variety $Y$. 
\begin{align*}
\mathcal{E}(\mathcal{B})&=\{D_Z\in \mathcal{B}\mid D_Z \text{ is pseudo-effective} \}, \\
\mathcal{M}_f(\mathcal{B})&=\{D_Z\in\mathcal{E}(\mathcal{B})\mid f \text{ is a minimal model of } D_Z\}, \\
\mathcal{A}_g(\mathcal{B})&=\{D_Z\in\mathcal{E}(\mathcal{B})\mid g \text{ is the ample model of } D_Z\}, \\
\mathcal{N}(\mathcal{B})&=\{D_Z\in\mathcal{E}(\mathcal{B})\mid D_Z \text{ is nef}\} 
\end{align*}
and we denote the closure of $\mathcal{A}_g(\mathcal{B})$ by $\mathcal{C}_g(\mathcal{B})$. 
We simply write $\mathcal{A}_g$ to denote $\mathcal{A}_g(\mathcal{B})$ 
if there is no risk of confusion. 
\end{sym}

In this section, we fix the following notation unless otherwise mentioned: 
\begin{itemize}
\item $Z$ is a $\mathbb{Q}$-factorial projective toric variety, 
\item $\mathcal{B}$ is a convex polytope of $\mathcal{V}(Z)$, which is defined over $\mathbb{Q}$. 
\end{itemize}

\begin{prop}\label{partition}
There are only finitely many rational contractions $g_i:Z\dasharrow X_i$ $(1\leq i\leq l)$ such that 
$\mathcal{E}(\mathcal{B})$ is partition by $\mathcal{A}_{g_i}$. 
\end{prop}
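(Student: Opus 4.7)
The plan is to realize this as the restriction to $\mathcal{B}$ of a Mori chamber decomposition of the pseudo-effective cone of $Z$, using the fact that every $\mathbb{Q}$-factorial projective toric variety is combinatorially a Mori dream space. Explicitly, I will exhibit finitely many toric rational contractions $g_1,\dots,g_l$ from $Z$ such that each $\mathcal{A}_{g_i}(\mathcal{B})$ is the relative interior of a rational polyhedral subset of $\mathcal{B}$, and then verify that they cover $\mathcal{E}(\mathcal{B})$ disjointly.

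First I would check that every $D\in\mathcal{E}(\mathcal{B})$ has a toric ample model. Since $D$ is pseudo-effective and $Z$ is $\mathbb{Q}$-factorial projective toric, running the $D$-MMP via \cite{fujino-sato} terminates with a minimal model $f_D:Z\dasharrow Z'_D$ for which $(f_D)_*D$ is nef; composing with the semi-ample fibration of $(f_D)_*D$ produces the ample model $g_D:Z\dasharrow X_D$, which by uniqueness of ample models is torus-equivariant and hence toric. Next, I would argue that only finitely many such $g_D$ can occur: a toric rational contraction $g:Z\dasharrow X$ is determined by combinatorial data, namely the subset of rays of $\Sigma_Z$ contracted by $g$ together with a fan structure on the target obtained as a coarsening of the image of $\Sigma_Z$ under a suitable lattice quotient. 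Since $\Sigma_Z$ has only finitely many rays and cones, there are only finitely many such combinatorial possibilities; enumerate the resulting toric rational contractions that arise this way as $g_1,\dots,g_l$.

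For each $g=g_i$, the set $\mathcal{A}_g(\mathcal{B})$ is cut out in $\mathcal{V}(Z)$ by rational linear conditions read off from Definition~\ref{am}: on a toric common resolution $p:W\to Z$, $q:W\to X$, one needs $p^*D-q^*(g_*D)$ to be effective and $q$-exceptional, and $g_*D$ to be ample on $X$ (equivalently, the associated support function is strictly convex on the fan of $X$). Both conditions amount to finite collections of rational linear (in)equalities, so $\mathcal{A}_{g_i}(\mathcal{B})$ is the relative interior of a rational polyhedron inside $\mathcal{B}$. Uniqueness of ample models, recorded in \cite[Lemma~3.6.6]{bchm}, forces these sets to be pairwise disjoint, hence $\mathcal{E}(\mathcal{B})=\bigsqcup_i\mathcal{A}_{g_i}(\mathcal{B})$. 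The main obstacle will be the combinatorial bookkeeping in the second step: pinning down the bijection between toric rational contractions from $Z$ and their combinatorial data (contracted rays, lattice quotient, coarsening), and verifying that the finite list $g_1,\dots,g_l$ genuinely enumerates every ample model arising from some pseudo-effective $D\in\mathcal{B}$.
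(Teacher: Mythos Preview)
Your proposal is correct and follows the same two ingredients the paper cites: uniqueness of ample models (\cite[Lemma~3.6.6]{bchm}) for the disjointness of the $\mathcal{A}_{g_i}$, and a finiteness statement for the index set. The paper's one-line proof simply invokes finiteness of minimal models \cite[Theorem~15.5.15]{toric-book}: every ample model factors through some minimal model $f:Z\dasharrow Z'$, and from each $Z'$ there are only finitely many semi-ample contractions, one per face of the polyhedral nef cone.

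Your finiteness argument takes a slightly different tack, counting toric rational contractions directly by their combinatorial data. This is sound in spirit, but note that your description of the target fan as ``a coarsening of the image of $\Sigma_Z$ under a suitable lattice quotient'' is literally correct only for toric \emph{morphisms}; for a genuine rational contraction one must first pass to a common toric resolution $W$, and then the relevant finiteness becomes the statement that the GKZ/secondary fan of $Z$ has finitely many chambers---which is precisely the content of \cite[Theorem~15.5.15]{toric-book}. So the two routes converge on the same underlying fact; the paper's citation is just the packaged version of the combinatorics you are sketching. Your additional observation that each $\mathcal{A}_{g_i}(\mathcal{B})$ is the relative interior of a rational polytope is correct and useful, though the paper does not state it here explicitly.
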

\begin{proof}
It follows from the finiteness of minimal models (see~\cite[Theorem 15.5.15]{toric-book}) 
and the property of ample models (cf.~\cite[Lemma 3.6.6]{bchm}). 
\end{proof}

The following two statements come from \cite[Theorem 3.3]{hacon-mckernan} 
and these are easy consequences of the minimal model theory. 
Thus we give simple proofs. 
For the details, see \cite[Theorem 3.3 (2), (3)]{hacon-mckernan}. 

\begin{prop}
With notation as in Proposition \ref{partition}. 
If $\mathcal{A}_{g_j}\cap\mathcal{C}_{g_i}\neq\emptyset$ for $1\leq i, j\leq l$, 
then there is a contraction morphism $g_{i, j}:X_i\to X_j$ such that $g_j=g_{i, j}\circ g_i$. 
\end{prop}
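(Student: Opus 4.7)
Pick $D_Z\in\mathcal{A}_{g_j}\cap\mathcal{C}_{g_i}$; the plan is to realize $g_{i,j}$ as the ample model on $X_i$ of (the strict transform of) $D_Z$, using that this strict transform is a nef limit of ample divisors coming from nearby members of $\mathcal{A}_{g_i}$.

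First I would produce the relevant nef divisor on $X_i$. Fix a common toric resolution $p:W\to Z$ and $q_i:W\to X_i$ of the rational contraction $g_i$. Choose a sequence $\{D_n\}\subset\mathcal{A}_{g_i}$ with $D_n\to D_Z$, which exists because $D_Z$ lies in $\mathcal{C}_{g_i}=\overline{\mathcal{A}_{g_i}}$. For each $n$ the ample-model condition of Definition \ref{am} yields an ample $\mathbb{R}$-divisor $H_n$ on $X_i$ and an effective divisor $E_n\geq 0$ with $p^*D_n\sim_{\mathbb{R}} q_i^*H_n+E_n$. Passing to the limit in the finite-dimensional space $N^1(W)$ produces $H$ nef on $X_i$ and $E\geq 0$ with $p^*D_Z\sim_{\mathbb{R}} q_i^*H+E$.

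Next I would take an ample model of $H$ on $X_i$ and match it against $g_j$. Since $H$ is a pseudo-effective $\mathbb{R}$-divisor on the $\mathbb{Q}$-factorial projective toric variety $X_i$, by the remark following Definition \ref{am} it admits a toric ample model $g_{i,j}:X_i\to X_j'$. Pulling back to $W$ (after further resolution so that $W\dashrightarrow X_j'$ becomes a morphism) and combining with the decomposition from the previous paragraph presents $p^*D_Z$ as the pullback of an ample divisor from $X_j'$ plus an effective divisor; verifying the minimality condition of Definition \ref{am} then exhibits $g_{i,j}\circ g_i$ as the ample model of $D_Z$. Since $g_j$ is the ample model of $D_Z$ by hypothesis, uniqueness of ample models (see \cite[Lemma 3.6.6]{bchm}) forces $X_j'=X_j$ and $g_j=g_{i,j}\circ g_i$.

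The delicate part is the first step: I must ensure that the ample-model decompositions $p^*D_n\sim_{\mathbb{R}} q_i^*H_n+E_n$ can be chosen to vary continuously with $n$ so that they admit well-defined limits as $n\to\infty$. This relies on polyhedrality of $\mathcal{B}$ and on the finite partition from Proposition \ref{partition}, which together let one restrict to a small rational neighborhood of $D_Z$ inside $\overline{\mathcal{A}_{g_i}}$ on which the ample-model data is piecewise linear in the divisor class. Once this continuity is in place, the rest is a formal consequence of the existence of toric ample models and the uniqueness statement in \cite[Lemma 3.6.6]{bchm}.
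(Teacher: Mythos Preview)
Your approach is viable but genuinely different from the paper's, and you leave one point only half-justified.

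The paper does not take limits at all. It picks $D_Z\in\mathcal{A}_{g_i}$, runs the $D_Z$-MMP to a minimal model $f:Z\dashrightarrow X$, and uses the semi-ample fibration $g:X\to X_i$ to factor $g_i=g\circ f$ through an honest morphism. One then pushes a divisor lying in $\mathcal{A}_{g_j}\cap\mathcal{C}_{g_i}$ forward to $X$ (where it is nef, since $f$ is a weak log canonical model for nearby divisors), and then to $X_i$ via the morphism $g$; the resulting divisor on $X_i$ is semi-ample, and its fibration is $g_{i,j}$. The advantage of routing through a minimal model is that every map involved is already a morphism, and the minimality clause in Definition~\ref{am} is automatic from the negativity lemma, so nothing needs to be checked by hand.

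Your limit argument can be made to work, but two points deserve more care. First, you assert that $X_i$ is $\mathbb{Q}$-factorial; in the generality of Proposition~\ref{partition} this is not guaranteed (cf.\ Proposition~\ref{span}). What you actually need is only that $H$ is $\mathbb{R}$-Cartier and nef, which does follow because it is a limit of ample $\mathbb{R}$-Cartier divisors and the $\mathbb{R}$-Cartier locus is closed; then ``nef $\Rightarrow$ semi-ample'' on toric varieties makes the ample model $g_{i,j}$ of $H$ a genuine morphism. You should say this explicitly, since otherwise the reader has no reason to believe $g_{i,j}$ is a morphism rather than a rational map. Second, the step you flag as ``delicate'' really is the crux: you must show that the limit $E$ still satisfies the minimality clause of Definition~\ref{am}, i.e.\ $B\geq E$ for every $B\in|p^*D_Z|_{\mathbb{R}}$. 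Your appeal to piecewise linearity is the right idea (in the toric setting the $E_n$ are the negative parts of the divisorial Zariski decomposition of $p^*D_n$, and these vary piecewise linearly on $\mathcal{E}(\mathcal{B})$), but as written it is an assertion rather than an argument; the paper's MMP route sidesteps this verification entirely.
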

\begin{proof}[Sketch of Proof]
We take $D_Z\in\mathcal{A}_{g_i}$. 
Running the $D_Z$-MMP, we end up with a minimal model $f:Z\dasharrow X$ of $D_Z$. 
Then there is a contraction morphism $g:X\to X_i$ such that $g_i=g\circ f$. 
Using this morphism $g$, we can construct a semi-ample divisor on $X_i$ 
associated to the contraction morphism satisfying the desired property. 
\end{proof}

\begin{prop}\label{span}
With notation as in Proposition \ref{partition}. 
Assume that $\mathcal{B}$ spans $N^{1}(Z)$. 
For any $1\leq i\leq l$, the following are equivalent:
\begin{itemize}
\item[$\bullet$] there is a rational polytope $\mathcal{C}$ contained in $\mathcal{C}_{g_i}$ 
which intersects the interior of $\mathcal{B}$ and spans $\mathcal{B}$.
\item[$\bullet$] $g_i$ is birational and $X_i$ is $\mathbb{Q}$-factorial. 
\end{itemize}
\end{prop}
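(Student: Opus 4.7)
The plan is to reduce the claim to a dimension count for $\mathcal{A}_{g_i}$. Since $\mathcal{B}$ spans $N^{1}(Z)$, the existence of a rational polytope $\mathcal{C}\subseteq\mathcal{C}_{g_i}$ meeting the interior of $\mathcal{B}$ and spanning $\mathcal{B}$ is equivalent to the statement that the image of $\mathcal{A}_{g_i}$ in $N^{1}(Z)$ has the maximal dimension $\rho(Z)$. Fixing a toric common resolution $p:W\to Z$, $q:W\to X_i$ of $g_i$ and writing $E_1,\dots,E_k$ for the $g_i$-exceptional torus-invariant prime divisors on $Z$, any $D_Z\in\mathcal{A}_{g_i}$ admits a decomposition $p^{*}D_Z\sim_{\mathbb{R}}q^{*}H+E$ with $H$ ample on $X_i$ and $E\geq 0$ a $q$-exceptional $\mathbb{R}$-divisor. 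Near $D_Z$, one may perturb $H$ freely inside $\mathrm{Amp}(X_i)$ (an open cone of dimension $\rho(X_i)$ in $N^{1}(X_i)$) and perturb the coefficients of the $E_j$ freely in a $k$-dimensional direction while remaining in $\mathcal{A}_{g_i}$. A test-curve argument on $W$, using curves contracted by $q$ that meet the strict transforms of the $E_j$ non-trivially, shows that these two families of perturbations give linearly independent directions in $N^{1}(Z)$, so $\dim\mathcal{A}_{g_i}=\rho(X_i)+k$ in $N^{1}(Z)$.

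For the implication $(\Leftarrow)$, assume $g_i$ is birational and $X_i$ is $\mathbb{Q}$-factorial. Then the fan of $X_i$ is obtained from that of $Z$ by coarsening, with rays precisely the non-$g_i$-exceptional rays of $Z$. The combinatorial formula $\rho(X_i)=(\text{number of rays of }X_i)-\dim X_i$, valid for $\mathbb{Q}$-factorial toric varieties, gives $\rho(X_i)=\rho(Z)-k$, hence $\rho(X_i)+k=\rho(Z)$. Thus $\mathcal{A}_{g_i}$ has maximal dimension inside $\mathcal{B}$; its relative interior is automatically contained in the relative interior of $\mathcal{B}$, and a small rational polytope neighborhood of any relative interior point of $\mathcal{A}_{g_i}$ supplies the required $\mathcal{C}$.

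For $(\Rightarrow)$ I prove the contrapositive. If $g_i$ is not birational, take the class $C\in N_{1}(Z)$ of a curve lying in a general fiber of $g_i$. Then $C\neq 0$, while $C\cdot q^{*}H=0$ for all $H\in N^{1}(X_i)$ and $C\cdot E_j=0$ for each $j$, since each $E_j$ has image of codimension at least one in $X_i$ and hence misses the general fiber. Therefore $\mathcal{A}_{g_i}\subseteq C^{\perp}$, a hyperplane of dimension $\rho(Z)-1$, so $\dim\mathcal{A}_{g_i}<\rho(Z)$. If instead $g_i$ is birational but $X_i$ is not $\mathbb{Q}$-factorial, then $\rho(X_i)<(\text{number of rays of }X_i)-\dim X_i=\rho(Z)-k$, again yielding $\rho(X_i)+k<\rho(Z)$. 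The main obstacle, in my view, is verifying the linear independence of the two perturbation families in $N^{1}(Z)$: a coefficient change at a non-exceptional torus-invariant prime of $Z$ can be absorbed into the pullback direction modulo $\mathbb{R}$-linear equivalence, so the test curves on $W$ must be selected with care.
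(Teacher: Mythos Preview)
Your dimension-count strategy is quite different from the paper's argument. For the forward implication the paper simply picks $D_Z$ in the relative interior of $\mathcal{C}\cap\mathcal{A}_{g_i}$ inside the interior of $\mathcal{B}$, runs the $D_Z$-MMP to obtain a minimal model $f:Z\dashrightarrow X$, and observes that since $D_Z$ sits in the relative interior of $\mathcal{A}_{g_i}$ the ample model $g_i$ must coincide with $f$; hence $g_i$ is birational with $\mathbb{Q}$-factorial target. The converse is declared easy. Your approach instead tries to compute $\dim\mathcal{A}_{g_i}=\rho(X_i)+k$ directly; this is essentially the content of the \emph{next} proposition in the paper (Proposition~\ref{picard}), so you are front-loading work the paper defers, and for the birational case your plan is sound modulo the independence check you flag.

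There is, however, a genuine gap in your contrapositive for the non-birational case. You assert $\mathcal{A}_{g_i}\subseteq C^{\perp}$ for a curve $C$ in a general fibre because $C\cdot q^{*}H=0$ and $C\cdot E_j=0$. But this only shows $C$ is orthogonal to the span of pullback classes and the $E_j$; it does \emph{not} give $C\cdot D_Z=0$ for all $D_Z\in\mathcal{A}_{g_i}$, since when $g_i$ is not birational the fixed part $E$ in $p^{*}D_Z\sim_{\mathbb{R}}q^{*}H+E$ need not be supported on divisors missing the general fibre. Take $Z=\mathbb{P}^{1}\times\mathbb{F}_{1}$ with $g_i$ the first projection and $D_Z=p_1^{*}(\mathrm{pt})+p_2^{*}E_0$, where $E_0\subset\mathbb{F}_1$ is the $(-1)$-curve. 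Then $g_i$ is the ample model of $D_Z$, the fixed part is $\mathbb{P}^{1}\times E_0$, which dominates $X_i$, and for $C=\{\mathrm{pt}\}\times F$ (a ruling fibre) one has $C\cdot D_Z=1\neq 0$. Here $\dim\mathcal{A}_{g_i}=2$ in $N^{1}(Z)\cong\mathbb{R}^{3}$, while your formula gives $\rho(X_i)+k=1$. The clean fix is to drop the curve argument: if $g_i$ is not birational then every $D_Z\in\mathcal{A}_{g_i}$ is non-big, hence $\mathcal{C}_{g_i}$ lies in the closed non-big locus, whereas a polytope $\mathcal{C}\subseteq\mathcal{C}_{g_i}\subseteq\mathcal{E}(\mathcal{B})$ spanning $\mathcal{B}$ has full-dimensional image in $N^{1}(Z)$ inside the pseudo-effective cone and therefore contains a big class---a contradiction.
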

\begin{proof}[Sketch of Proof]
Suppose that $\mathcal{C}$ span $\mathcal{B}$. 
We take $D_Z$ belonging to the relative interior of 
$\mathcal{C}\cap\mathcal{A}_{g_i}$ belonging to the interior of $\mathcal{B}$. 
Running the $D_Z$-MMP, we end up with a minimal model $f:Z\dasharrow X$ of $D_Z$. 
Then there is certain index $1\leq j\leq l$ such that $f=g_j$. 
Since $D_Z$ belonging to the relative interior of $\mathcal{A}_{g_i}$, we see that $i=j$. 
Thus $g_i=f$ is birational and $X_i=X$ is $\mathbb{Q}$-factorial. 
It is easy to see the converse.  
\end{proof}

The following proposition is the key result of this paper. 

\begin{prop}\label{picard} 
With notation as in Proposition \ref{partition}. 
Assume that $\mathcal{B}$ spans $N^{1}(Z)$. 
If $\mathcal{C}_{g_i}$ spans $\mathcal{B}$ 
and $D_Z$ is a general point of $\mathcal{A}_{g_j}\cap\mathcal{C}_{g_i}$, 
which is also a point of the interior of $\mathcal{B}$ for $1\leq i, j\leq l$, 
then $\rho(X_i/X_j)=\dim\mathcal{C}_{g_i}-\dim\mathcal{C}_{g_j}\cap\mathcal{C}_{g_i}$. 
\end{prop}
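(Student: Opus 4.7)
The plan is to transfer the dimension count from the chambers in $\mathcal{V}(Z)$ to a difference of Picard numbers on $X_i$ and $X_j$ via a pushforward map, mirroring the argument of \cite[Theorem 3.3]{hacon-mckernan}. First I would apply Proposition \ref{span} to conclude that $g_i$ is birational and $X_i$ is $\mathbb{Q}$-factorial. Since $\mathcal{A}_{g_j}\cap\mathcal{C}_{g_i}$ contains $D_Z$, the preceding proposition yields a contraction morphism $g_{i,j}\colon X_i\to X_j$ with $g_j=g_{i,j}\circ g_i$. Because $X_i$ and $X_j$ are projective toric and $g_{i,j}$ has connected fibers, the formula recalled in Section~\ref{m-sec2} gives $\rho(X_i/X_j)=\rho(X_i)-\rho(X_j)$, so it suffices to prove that $\dim\mathcal{C}_{g_i}-\dim(\mathcal{C}_{g_j}\cap\mathcal{C}_{g_i})=\rho(X_i)-\rho(X_j)$.

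Next I would introduce the linear map $\pi_i\colon\mathcal{V}(Z)\to N^{1}(X_i)$ sending a torus-invariant prime divisor on $Z$ to the class of its strict transform on $X_i$ (and to $0$ if it is $g_i$-exceptional). Since $g_i$ is toric and birational, the rays of the fan of $X_i$ form a subset of those of $Z$, so $\pi_i$ is surjective, and because principal divisors push forward to principal divisors it descends to a surjection $N^{1}(Z)\twoheadrightarrow N^{1}(X_i)$. Unwinding Definition~\ref{am} shows that for $D'\in\mathcal{A}_{g_i}$ the class $\pi_i(D')$ is ample on $X_i$, so by continuity $\pi_i(\mathcal{C}_{g_i})\subset\operatorname{Nef}(X_i)$; together with the spanning hypothesis this forces $\dim\pi_i(\mathcal{C}_{g_i})=\rho(X_i)$. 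By the same mechanism, for $D'\in\mathcal{A}_{g_j}\cap\mathcal{C}_{g_i}$ the class $\pi_i(D')$ lies in the face $F:=g_{i,j}^{*}\operatorname{Nef}(X_j)$ of $\operatorname{Nef}(X_i)$, which has dimension $\rho(X_j)$, and I expect the general $D_Z$ in $\mathcal{A}_{g_j}\cap\mathcal{C}_{g_i}$ to land in the relative interior of $F$.

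Finally I would compare fiber dimensions of $\pi_i$ at the generic $D_Z$. The key claim is that the $\pi_i$-fiber through $D_Z$ inside $\mathcal{C}_{g_i}$ coincides with the $\pi_i$-fiber inside $\mathcal{C}_{g_j}\cap\mathcal{C}_{g_i}$: if $D'\in\mathcal{C}_{g_i}$ satisfies $\pi_i(D')=\pi_i(D_Z)\in F$, then the ample model of the nef class $\pi_i(D')$ on $X_i$ is $g_{i,j}$, and uniqueness of ample models (cf.~\cite[Lemma 3.6.6]{bchm}) applied to the composition $Z\dashrightarrow X_i\to X_j$ identifies the ample model of $D'$ on $Z$ with $g_j$, so $D'\in\mathcal{C}_{g_j}$. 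Consequently both fibers have the same generic dimension $k$, giving
\[
\dim\mathcal{C}_{g_i}=\rho(X_i)+k,\qquad \dim(\mathcal{C}_{g_j}\cap\mathcal{C}_{g_i})=\rho(X_j)+k,
\]
from which the claim follows. The main obstacle is to verify rigorously this chamber compatibility of $\pi_i$ --- namely, that $\pi_i$ sends the relative interior of $\mathcal{A}_{g_j}\cap\mathcal{C}_{g_i}$ onto the relative interior of $F$, and that the converse factorization of ample models used in the fiber argument is valid. This should follow from combining the toric structure of the nef cone of $X_i$ with the uniqueness properties of ample models, but the book-keeping is the most delicate part of the proof.
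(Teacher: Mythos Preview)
Your proposal is correct and follows essentially the same route as the paper. The paper simply makes your pushforward map $\pi_i$ concrete: it names the $g_i$-exceptional divisors $E_1,\dots,E_k$, replaces them by numerically equivalent combinations $B_1,\dots,B_k$ of elements of $\mathcal{B}$ (using that $\mathcal{B}$ spans $N^1(Z)$), and thereby exhibits a local product decomposition $\mathcal{C}_{g_i}\simeq\mathcal{N}(\mathcal{B}_{X_i})\times\mathbb{R}^k$ near $D_Z$; the $\mathbb{R}^k$ factor is exactly your $\pi_i$-fiber. For the face $F=g_{i,j}^{*}\operatorname{Nef}(X_j)$ it likewise writes down $\rho(X_j)$ independent ample classes $H_1,\dots,H_{\rho(X_j)}$ on $X_j$ and checks that perturbing $D_Z$ by small multiples of their pullbacks stays in $\mathcal{A}_{g_j}\cap\mathcal{C}_{g_i}$, which is precisely the ``chamber compatibility'' you flag as the delicate step. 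One point the paper makes explicit and you leave implicit is that $g_i$ is $D_Z$-\emph{negative} (not merely non-positive): this is obtained by first moving to $D_Z+\delta B_0\in\mathcal{M}_{g_i}$ and then invoking the genericity of $D_Z$, and it is what guarantees that the local product structure (equivalently, that your $\pi_i$-fiber through $D_Z$ inside $\mathcal{C}_{g_i}$ has the full kernel dimension) actually holds at $D_Z$.
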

\begin{proof} 
Putting $X=X_i$ and $f=g_i$, 
by Proposition \ref{span}, $X$ and $f$ is birational is $\mathbb{Q}$-factorial. 
Let $E_1, \dots, E_k$ be all $f$-exeptional divisors. 
Since $\mathcal{B}$ spans $N^{1}(Z)$, we can take $B_i\in\mathcal{V}(Z)$, 
which are linear combinations of the elements of $\mathcal{B}$, 
such that $B_i\equiv E_i$ and put $B_0=\sum B_i$ and $E_0=\sum E_i$. 
Since $D_Z$ is contained in the interior of $\mathcal{B}$, 
there is a sufficiently small rational number $\delta>0$ 
such that $D_Z+\delta B_0\in\mathcal{B}$. 
Then $f$ is $(D_Z+\delta E_0)$-negative and so it is a minimal model of $D_Z+\delta E_0$ 
and $g_j$ is the ample model of $D_Z+\delta E_0$. 
Thus $D_Z+\delta B_0\in\mathcal{M}_f(\mathcal{B})$ and $D_Z+\delta B_0\in\mathcal{A}_{g_j}$. 
In particular, $D_Z+\delta B_0\in\mathcal{A}_{g_j}\cap\mathcal{C}_f$. 
Since $D_Z$ is general in $\mathcal{A}_{g_j}\cap\mathcal{C}_f$, $D_Z\in\mathcal{M}_f(\mathcal{B})$
and so $f$ is $D_Z$-negative. 

We fix a sufficiently small rational number $\epsilon>0$ such that 
if $D'_Z\in \mathcal{B}$ with $||D'_Z-D_Z||<\epsilon$, 
then $D'_Z\in\mathcal{B}$ and $f$ is $D_Z$-negative. 
Then $D'_Z\in\mathcal{C}_f$ if and only if $D'_{X}=f_*D'_Z$ is nef. 

For any $(a_1, \dots, a_k)\in\mathbb{R}^k$, we put $E=\sum a_iE_i$ and $B=\sum a_iB_i$. 
We put $\mathcal{B}_X=\{D'_X={f}_*D'_Z\mid D'_Z\in \mathcal{B}\}\subset\mathcal{V}(X)$. 
Then $D'_X\in\mathcal{N}(\mathcal{B}_X)$ 
if and only if $D'_{X}+f_*B\in\mathcal{N}(\mathcal{B}_X)$ 
as $D'_Z+B$ is numerically equivalent to $D'_Z+E$. 
This means that 
\[
\mathcal{C}_f\simeq\mathcal{N}(\mathcal{B}_X)\times\mathbb{R}^k
\]
in a neighbourhood of $D_Z$. 

By the above argument and \cite[Lemma 3.6.6]{bchm}, 
$D'_Z\in\mathcal{A}_{g_j}\cap\mathcal{C}_f$ if and only if it holds that $D'_X=f_*D'_Z\in\mathcal{N}(\mathcal{B}_X)$ 
and there is an ample $\mathbb{R}$-Cartier divisor $H$ on $X_j$ such that $f_*D'_Z=(g_{i, j})^*H$, 
where $g_{i, j}:X\to X_j$ is a contraction morphism. 
Since $D_Z\in\mathcal{A}_{g_j}\cap\mathcal{C}_f$, there is an ample $\mathbb{R}$-Cartier divisor $H_0$ on $X_j$ 
such that $f_*D_Z=(g_{i, j})^*H_0$ 
and so there are ample $\mathbb{R}$-Cartier divisors $H_1, \ldots, H_{\rho(X_j)}$, 
whose images on $N^1(X_j)$ are linearly independent, 
such that $f_*D'_Z=(g_{i, j})^*(H_0+\sum b_iH_i)$ 
for any $(b_1, \ldots, b_{\rho(X_j)})\in\mathbb{R}^{\rho(X_j)}$ with $H_0+\sum b_iH_i$ is ample.  
Thus
\[
\dim\mathcal{C}_{g_j}\cap\mathcal{C}_f=k+\rho(X_j). 
\]
Therefore we obtain 
\begin{align*}
\rho(X_i/X_j)&=\rho(X_i)-\rho(X_j)\\
&=\dim\mathcal{N}(\mathcal{B}_X)-\rho(X_j)\\
&=\dim\mathcal{C}_f-\dim\mathcal{C}_{g_j}\cap\mathcal{C}_f. 
\end{align*}
\end{proof}

We recall the following Bertini-type statement for the reader’s convenience. 

\begin{lem}{\em(cf.~\cite[Corollary 3.4]{hacon-mckernan})}\label{bertini}
Let $\mathcal{P}$ be a convex polytope in $\mathcal{V}(Z)$ which spans N$^{\ 1}(Z)$. 
Then for any general affine subspace $\mathcal{H}\subset\mathcal{V}(Z)$, 
the intersection $\mathcal{P}\cap \mathcal{H}$ of $\mathcal{P}$ and $\mathcal{H}$ 
satisfies the conclusions of Proposition \ref{span} and \ref{picard}. 
\end{lem}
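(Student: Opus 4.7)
The plan is to combine the finiteness of the partition given by Proposition~\ref{partition} with a transversality argument: for a general affine subspace $\mathcal{H}$, the slice $\mathcal{B}:=\mathcal{P}\cap\mathcal{H}$ inherits a well-behaved partition of its pseudo-effective locus from that of $\mathcal{P}$, and the relevant cone dimensions drop uniformly by $\codim\mathcal{H}$.

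First I would apply Proposition~\ref{partition} to $\mathcal{P}$, obtaining finitely many rational contractions $g_1,\dots,g_l$ whose closures $\mathcal{C}_{g_i}$ are rational polytopes. Together with the finite list of pairwise intersections $\mathcal{C}_{g_i}\cap\mathcal{C}_{g_j}$, these impose finitely many transversality conditions on affine subspaces of each fixed dimension, each open and dense in the parameter space of affine subspaces of $\mathcal{V}(Z)$. For an $\mathcal{H}$ general enough to satisfy all of them simultaneously, and whenever $\mathcal{H}$ meets $\mathcal{C}_{g_i}\cap\mathcal{C}_{g_j}$ non-trivially through the interior of $\mathcal{P}$, one has
\[
\dim(\mathcal{C}_{g_i}\cap\mathcal{H})=\dim\mathcal{C}_{g_i}-\codim\mathcal{H}, \qquad \dim(\mathcal{C}_{g_j}\cap\mathcal{C}_{g_i}\cap\mathcal{H})=\dim(\mathcal{C}_{g_j}\cap\mathcal{C}_{g_i})-\codim\mathcal{H}.
\]
Subtracting these yields the dimension identity of Proposition~\ref{picard} for $\mathcal{B}$, after reinterpreting ``spans $\mathcal{B}$'' as ``spans the affine hull of $\mathcal{B}$''. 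In particular $\mathcal{C}_{g_i}\cap\mathcal{H}$ spans $\mathcal{B}$ if and only if $\mathcal{C}_{g_i}$ spans $\mathcal{P}$, so the hypothesis of Proposition~\ref{picard} in the slice corresponds to its counterpart in the ambient setting.

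For the assertion of Proposition~\ref{span} in the slice, the implication from the $\mathbb{Q}$-factoriality and birationality of $g_i$ to the existence of a suitable spanning rational polytope $\mathcal{C}'\subset\mathcal{C}_{g_i}\cap\mathcal{H}$ is immediate: take the ambient polytope supplied by Proposition~\ref{span} applied to $\mathcal{P}$ and intersect with $\mathcal{H}$, which by general position gives a rational polytope in $\mathcal{C}_{g_i}\cap\mathcal{H}$ spanning $\mathcal{P}\cap\mathcal{H}$ and crossing its interior. The reverse implication is obtained by rerunning the proof of Proposition~\ref{span} inside the slice: pick a general $D_Z$ in the relative interior of $\mathcal{C}'\cap\mathcal{A}_{g_i}$ inside the interior of $\mathcal{B}$, run the $D_Z$-MMP to a minimal model, and observe that $\mathbb{Q}$-factoriality and birationality of $g_i$ are intrinsic properties of the contraction, independent of the ambient polytope.

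The main obstacle I expect is packaging ``general affine subspace'' precisely so that the finitely many transversality and interior-incidence conditions hold at once. Because Proposition~\ref{partition} bounds the number of cones involved, the set of admissible $\mathcal{H}$ is a finite intersection of open dense conditions, hence dense; once this set-up is fixed, the rest of the argument is essentially linear-algebraic bookkeeping of cone dimensions, combined with the inheritance of ample-model data from $\mathcal{P}$ to $\mathcal{P}\cap\mathcal{H}$.
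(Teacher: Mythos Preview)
The paper does not supply its own proof of this lemma; it is simply quoted from \cite[Corollary 3.4]{hacon-mckernan}. Your transversality argument---finitely many polytopes from Proposition~\ref{partition}, hence finitely many open dense conditions on $\mathcal{H}$, hence uniform codimension drop preserving the difference $\dim\mathcal{C}_{g_i}-\dim(\mathcal{C}_{g_j}\cap\mathcal{C}_{g_i})$---is exactly the approach taken in the cited reference, and is correct.

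One small point worth tightening: in your treatment of Proposition~\ref{picard} you should note that the ``general $D_Z$'' hypothesis there excludes a proper closed subset of $\mathcal{A}_{g_j}\cap\mathcal{C}_{g_i}$ (namely the locus where $f$ fails to be $D_Z$-negative), and genericity of $\mathcal{H}$ must also be arranged so that the slice is not contained in that bad locus. This is again a finite list of avoidance conditions, so it folds into your existing set-up, but it is the one place where ``general point of the slice'' does not automatically mean ``general point of the ambient intersection''.
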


\begin{lem}\label{classification}
Assume that $\mathcal{B}$ satisfies the conclusion of Propositions \ref{span} and \ref{picard} 
and that $\dim \mathcal{B}=2$. 
Let $f:Z\dasharrow X$ and $g:Z\dasharrow Y$ be two rational contractions such that 
$\dim\mathcal{C}_f=2$ and $\dim\mathcal{O}=1$, 
where $\mathcal{O}=\mathcal{C}_f\cap\mathcal{C}_g$. 
Assume that $\rho(X)\geq\rho(Y)$ and that $\mathcal{O}$ is not contained in the boundary of $\mathcal{B}$. 
Let $D_Z$ be a point in the relative interior of $\mathcal{O}$ and $D_X=f_*D_Z$. 

Then there is a rational map $\pi:X\dasharrow Y$ such that $g=\pi\circ f$ and either 
\begin{itemize}
\item[(1)] $\rho(X)=\rho(Y)+1$ and $\pi$ is $D_X$-trivial, 
 \begin{itemize}
 \item[(a)] $\pi$ is birational and $\mathcal{O}$ is not contained in the boundary of $\mathcal{E}(\mathcal{B})$, 
  \begin{itemize}
  \item[(i)] $\pi$ is divisorial and $\mathcal{O}\neq\mathcal{C}_g$, 
  \item[(ii)] $\pi$ is small and $\mathcal{O}=\mathcal{C}_g$, 
  \end{itemize}
 \item[(b)] $\pi$ has a Mori fiber structure and $\mathcal{O}=\mathcal{C}_g$ 
 is contained in the boundary of $\mathcal{E}(\mathcal{B})$, 
 \end{itemize}
\item[(2)] $\rho(X)=\rho(Y)$, $\pi$ is a $D_X$-flop 
and $\mathcal{O}\neq\mathcal{C}_g$ is not contained in the boundary of $\mathcal{E}(\mathcal{B})$. 
\end{itemize}
\end{lem}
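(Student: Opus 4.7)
The plan is to realize $\pi$ through a common ample model on the wall $\mathcal{O}$ and then to read the classification off the chamber combinatorics via Propositions~\ref{span} and~\ref{picard}.

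Since $\dim\mathcal{C}_f=2=\dim\mathcal{B}$, Proposition~\ref{span} gives that $f$ is birational and $X$ is $\mathbb{Q}$-factorial, so $\pi:=g\circ f^{-1}$ is a well-defined rational map with $g=\pi\circ f$. Pick $D_Z$ general in the relative interior of $\mathcal{O}$ (which lies in the interior of $\mathcal{B}$ by hypothesis), and let $h:Z\dashrightarrow R$ be its ample model. Applying the proposition preceding Proposition~\ref{span} to $\mathcal{A}_h\cap\mathcal{C}_f\ni D_Z$ produces a contraction morphism $\alpha:X\to R$ with $h=\alpha\circ f$, and the same applied to $\mathcal{A}_h\cap\mathcal{C}_g\ni D_Z$ produces $\beta:Y\to R$ with $h=\beta\circ g$, so that $\alpha=\beta\circ\pi$.

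The key chamber-dimension observation is $\dim\mathcal{C}_h=1$. Since the cells $\mathcal{A}_{g_i}$ partition $\mathcal{E}(\mathcal{B})$, the cell $\mathcal{A}_h$ avoids the two-dimensional open cells $\mathcal{A}_f$ and $\mathcal{A}_g$; thus near $D_Z$ it is contained in the one-dimensional face $\mathcal{O}$, while local constancy of ample models forces $\mathcal{A}_h$ to contain a relative neighborhood of $D_Z$ in $\mathcal{O}$. Proposition~\ref{picard} then yields $\rho(X/R)=2-1=1$, and $\rho(Y/R)=1$ whenever $\dim\mathcal{C}_g=2$. Moreover, Proposition~\ref{span} applied to $h$ shows that when $h$ is birational---equivalently $D_Z$ is big, equivalently $\mathcal{O}\not\subset\partial\mathcal{E}(\mathcal{B})$---the target $R$ is not $\mathbb{Q}$-factorial (since $\mathcal{C}_h$ cannot span $\mathcal{B}$).

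The classification now follows by branching on the two dichotomies $\mathcal{O}=\mathcal{C}_g$ versus $\mathcal{O}\ne\mathcal{C}_g$ (the former equivalent, by uniqueness of ample models, to $g=h$ and hence $Y=R$, so $\pi=\alpha$ is a morphism) and $\mathcal{O}\subset\partial\mathcal{E}(\mathcal{B})$ versus not (which decides whether $h$ is a Mori fibration or birational). In the $\mathcal{O}=\mathcal{C}_g$ cases, $\pi=\alpha$ is small when $D_Z$ is big (case (1)(a)(ii), using the non-$\mathbb{Q}$-factoriality of $R$) and a Mori fibration when $\mathcal{O}$ sits on the pseudo-effective boundary (case (1)(b)). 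In the $\mathcal{O}\ne\mathcal{C}_g$ case with $D_Z$ big, both $\alpha$ and $\beta$ are forced to be small---a divisorial extremal contraction would force $R$ to be $\mathbb{Q}$-factorial, contradicting the previous paragraph---so $\pi=\beta^{-1}\circ\alpha$ is a $D_X$-flop over $R$ (case (2)). The remaining combination is isolated by the asymmetry hypothesis $\rho(X)\ge\rho(Y)$ together with a divisor count coming from Proposition~\ref{picard}, which picks out the divisorial pattern of case (1)(a)(i). Finally, the $D_X$-trivial (respectively $D_X$-flopping) property follows from the ample-model identity $D_X\sim_{\mathbb{R}}\alpha^*H$ for an ample $H$ on $R$. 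The main technical obstacle is the chamber-dimension bookkeeping establishing $\dim\mathcal{C}_h=1$ and then pinning down, via the $\mathbb{Q}$-factoriality dichotomy of Proposition~\ref{span}, exactly which extremal type each of $\alpha,\beta$ must assume so that the listed cases exhaust the possibilities under $\rho(X)\ge\rho(Y)$.
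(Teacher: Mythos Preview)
Your argument contains a genuine gap at the ``key chamber-dimension observation'' $\dim\mathcal{C}_h=1$. The justification you give---that $\mathcal{A}_h$ must avoid the two-dimensional cells $\mathcal{A}_f$ and $\mathcal{A}_g$---tacitly assumes $h\neq g$. But nothing prevents the ample model of $D_Z$ from being $g$ itself: this is exactly what occurs in the divisorial case~(1)(a)(i). There $\dim\mathcal{C}_g=2$, the point $D_Z$ lies in $\mathcal{A}_g$ (the pushforward $g_*D_Z$ is ample on $Y$ even though $f_*D_Z$ is only nef on $X$), so $h=g$, $R=Y$, and $\dim\mathcal{C}_h=2$. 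Consequently $R$ \emph{is} $\mathbb{Q}$-factorial here, contradicting the conclusion you draw from Proposition~\ref{span}.

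This error propagates to your treatment of the branch $\mathcal{O}\neq\mathcal{C}_g$ with $D_Z$ big: you assert both $\alpha$ and $\beta$ are forced to be small because ``$R$ is not $\mathbb{Q}$-factorial'', and thus conclude this branch is always the flop case~(2). In fact this branch contains \emph{both} cases (1)(a)(i) and (2), distinguished precisely by whether $\rho(X)=\rho(Y)+1$ or $\rho(X)=\rho(Y)$; your two dichotomies do not separate them. The sentence about ``the remaining combination'' does not repair this, since there is no remaining combination once (A), (B), and (C) are accounted for (the combination $\mathcal{O}\neq\mathcal{C}_g$ with $\mathcal{O}\subset\partial\mathcal{E}(\mathcal{B})$ does not occur, and in any event does not correspond to (1)(a)(i), which explicitly requires $\mathcal{O}\not\subset\partial\mathcal{E}(\mathcal{B})$). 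The paper's proof avoids this by first splitting on $\dim\mathcal{C}_g\in\{1,2\}$; only in the subcase $\dim\mathcal{C}_g=2$ does it introduce the ample model $h:Z\dashrightarrow W$, and it then allows $\rho(Y/W)\in\{0,1\}$, the value $0$ giving $h=g$ and the divisorial contraction, the value $1$ giving the flop.
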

\begin{proof}
By Proposition \ref{span}, $f$ is birational and $X$ is $\mathbb{Q}$-factorial. 

If $\mathcal{O}$ is contained in the boundary of $\mathcal{E}(\mathcal{B})$, then $\dim\mathcal{C}_g=1$ 
and $\mathcal{O}=\mathcal{C}_g$. 
By Proposition \ref{picard}, there is a contraction $\pi:X\to Y$ which has a Mori fiber structure. 
This is (1, b).

In the rest of proof, we may assume that $\mathcal{O}$ is not contained in the boundary of $\mathcal{E}(\mathcal{B})$. 
If $\dim\mathcal{C}_g=1$, then $\mathcal{O}=\mathcal{C}_g$. 
By Proposition \ref{picard}, there is a contraction $\pi:X\to Y$ with $\rho(X/Y)=1$. 
Since $D_Z$ is not contained in the boundary of $\mathcal{E}(\mathcal{B})$, $D_Z$ is big and so $\pi$ is birational. 
Thus by Proposition \ref{span}, $Y$ is not $\mathbb{Q}$-factorial and so $\pi$ is small. 
This is (1, a, ii). 

We assume that $\dim\mathcal{C}_g=2$. 
Then $g$ is birational and $Y$ is $\mathbb{Q}$-factorial. 
Let $h:Z\to W$ is the ample model of $D_Z$. 
Thus Proposition \ref{picard}, there are two contractions $p:X\to W$ and $q:Y\to W$ 
with $\rho(X/W), \rho(Y/W)\leq1$. 
Then we can explicitly calculate the Picard numbers of X and Y and there are only two cases below: 
\begin{itemize}
\item[(1)] $\rho(X)=\rho(Y)+1$ or
\item[(2)] $\rho(X)=\rho(Y)$. 
\end{itemize}

In (1), $h=g$ and we put $\pi=p$. 
Then $\pi$ is divisorial and this is (1,a, i). 

In (2), $\rho(X/W)=\rho(Y/W)=1$. 
Then $\dim\mathcal{C}_h=1$ since $\dim\mathcal{O}=1$. 
By Theorem \ref{span},  $W$ is not $\mathbb{Q}$-factorial. 
Thus $p$ and $q$ are small and so $\pi$ is $D_X$-flop. 
\end{proof}

\begin{lem}{\em (cf.~\cite[Lemma 3.6]{hacon-mckernan})}\label{result}
Let $f:Z\dasharrow X$ be a birational contraction between $\mathbb{Q}$-factorial projective toric varieties. 
Let $D_Z$ and $D'_Z$ be two torus-invariant $\mathbb{R}$-divisors on $Z$. 
If $f$ is the ample model of $D'_Z$ and $D'_Z-D_Z$ is ample, 
then $f$ is the result of the running the $D_Z$-MMP. 
\end{lem}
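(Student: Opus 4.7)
The plan is to prove the lemma by interpolating between $D_Z$ and an ample divisor on $Z$ through the chamber structure of divisor space. Let $H := D'_Z - D_Z$, which is ample, and set $D_t := D_Z + tH$. For $t \gg 1$, $D_t$ is ample on $Z$, hence its ample model is the identity of $Z$; at $t = 1$, $D_t = D'_Z$ has ample model $f: Z \dashrightarrow X$ by hypothesis. I would read off the required $D_Z$-MMP from the chamber decomposition between these two values of $t$.

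First, fix $T_{\max} \gg 1$ so that $D_{T_{\max}}$ is ample on $Z$, and enclose the segment $\{D_t : t \in [1, T_{\max}]\}$ in the relative interior of a two-dimensional rational polytope $\mathcal{B} \subset \mathcal{V}(Z)$, placed in general position via Lemma \ref{bertini} so that Propositions \ref{span} and \ref{picard} and Lemma \ref{classification} all apply. By Proposition \ref{partition}, the segment meets only finitely many chambers, separated by walls at $T_{\max} > s_1 > \cdots > s_r > 1$; let the targets of these chambers, in order of decreasing $t$, be $Z = Y_0, Y_1, \ldots, Y_r = X$.

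At each wall $s_{k+1}$, Lemma \ref{classification} identifies the induced rational map $Y_k \dashrightarrow Y_{k+1}$ as a divisorial contraction, a small birational map, or a flop, associated to an extremal ray $R$ on $Y_k$ with $D_{s_{k+1}} \cdot R = 0$. Since $D_t$ is ample on $Y_k$ for $t \in (s_{k+1}, s_k)$, the sign of $D_t \cdot R$ must be positive just above $s_{k+1}$, forcing $H \cdot R > 0$ on $Y_k$. Therefore
\[
D_Z \cdot R = D_{s_{k+1}} \cdot R - s_{k+1}(H \cdot R) = -s_{k+1}(H \cdot R) < 0,
\]
so $R$ is a $D_Z$-negative extremal ray and $Y_k \dashrightarrow Y_{k+1}$ is a genuine step of the $D_Z$-MMP. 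Concatenating these gives a sequence of divisorial contractions and flips $Z = Y_0 \dashrightarrow \cdots \dashrightarrow Y_r = X$ for the $D_Z$-MMP; by uniqueness of the ample model of $D'_Z$ the composition agrees with $f$.

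The hardest point is the sign analysis: verifying that at each wall the relevant extremal ray lives on $Y_k$ (the ``upstream'' side, i.e., the side of larger $t$) and that the map $Y_k \dashrightarrow Y_{k+1}$ is oriented as a forward MMP step rather than its inverse. A secondary subtlety is the degenerate possibility that $t = 1$ itself lies on a wall, making the target at $t = 1$ equal to a wall-model rather than a full-dimensional chamber target; this can be handled by perturbing $H$ to $(1+\varepsilon)H$ for generic small $\varepsilon > 0$, which preserves all hypotheses, and then letting $\varepsilon \to 0$.
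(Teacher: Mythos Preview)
The paper does not give its own proof of this lemma; it is stated with a bare citation to \cite[Lemma~3.6]{hacon-mckernan}. Your argument is the standard MMP-with-scaling proof used there, recast through the chamber language of Proposition~\ref{partition} and Lemma~\ref{classification}. The essential step---the sign computation showing that each wall-crossing contracts a $D_Z$-negative ray because the pushforward of $H$ pairs positively with it---is correct and is exactly the point of the argument.

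Two remarks on the rough edges. First, your worry that $t=1$ might lie on a wall is unfounded. Since $f$ is a birational contraction to a $\mathbb{Q}$-factorial $X$, Proposition~\ref{span} forces $\mathcal{C}_f$ to be full-dimensional in $\mathcal{B}$; and because the $\mathcal{A}_{g_i}$ partition $\mathcal{E}(\mathcal{B})$ while proper faces of $\mathcal{C}_f$ carry pushforwards that are nef but not ample, one has $\mathcal{A}_f$ equal to the relative interior of $\mathcal{C}_f$. Thus $D'_Z\in\mathcal{A}_f$ already sits in the interior of a two-dimensional chamber. Your proposed patch, replacing $H$ by $(1+\varepsilon)H$, moves the endpoint to $D'_Z+\varepsilon H$, and $f$ need not remain the ample model of that divisor (for instance if some $f$-exceptional divisor has zero $D'_Z$-discrepancy but negative $H$-discrepancy); so the fix is not correct as written, though fortunately it is unnecessary.

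Second, the genuine degeneracy is not at $t=1$ but at intermediate values: the fixed line through $D_Z$ and $D'_Z$ may pass through a vertex of the chamber decomposition, and choosing the ambient $2$-plane $\mathcal{H}$ generically cannot move the line off such a vertex. This is easily handled either by factoring the jump at a vertex into several elementary contractions (your sign argument shows each one is still $D_Z$-negative), or---more simply---by abandoning the $2$-plane apparatus and running the $D_Z$-MMP with scaling of $H$ directly, which is closer to how \cite{hacon-mckernan} actually argues.
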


Next we will see that certain point contained in the boundary of $\mathcal{E}(V)$ are corresponding to a Sarkisov link. 
Before that we introduce the following additional notation. 

\begin{notation}
Assume that $\mathcal{B}$ satisfies the conclusion of Propositions \ref{span} and \ref{picard} 
and that $\dim \mathcal{B}=2$. 
Let $D^{\dag}_Z$ be a point contained in the boundary of $\mathcal{E}(\mathcal{B})$ 
and the interior of $\mathcal{B}$. 
If $D^{\dag}_Z$ is contained in only one polytope of the form $\mathcal{C}_\bullet$ of two-dimensional, 
then we assume that it is a vertex of $\mathcal{E}(\mathcal{B})$. 

Let $\mathcal{C}_{f_1}, \cdots, \mathcal{C}_{f_k}$ be all two-dimensional rational polytopes 
containing $D^{\dag}_Z$, where $f_i:Z\dasharrow X_i$ are rational maps. 
Note that $f_i$ is birational and $X_i$ is $\mathbb{Q}$-factorial by Proposition \ref{span}. 
Renumbering $\mathcal{C}_{f_i}$ to $\mathcal{C}_i$, 
we may assume that the intersections $\mathcal{O}_0$ and $\mathcal{O}_k$ 
of $\mathcal{C}_1$ and $\mathcal{C}_k$ with the boundary of $\mathcal{E}(\mathcal{B})$, 
$\mathcal{O}_i=\mathcal{C}_i\cap\mathcal{C}_{i+1}$ $(1\leq i\leq k-1)$ 
and all $\mathcal{O}_i$ are one-dimensional. 
We put $f=f_1:Z\dasharrow X=X_1$, $g=f_k:Z\dasharrow Y=X_k$, $X'=X_2$ and $Y'=X_{k-1}$. 
Let $\phi:X\to S=S_0$ and $\psi:Y\to T=S_k$ be the induced morphisms 
and let $h:Z\dasharrow R$ be the ample model of $D^{\dag}_Z$. 

\begin{multicols}{2}
\begin{center}
\begin{tikzpicture}[thick]
\draw (0, 0)node[below]{$D^{\dag}_Z$} -- (120: 3.5)[ultra thick]node[above]{$\mathcal{O}_1$}; 
\draw (0, 0) -- (-10: 4)[ultra thick]node[right]{$\mathcal{O}_0$}; 
\draw (1.5, 1.5) node{$\mathcal{C}_1$}; 
\end{tikzpicture}
\[k=1\]

\begin{tikzpicture}[thick]
\draw (0, 0)node[below]{$D^{\dag}_Z$} -- (120: 3.5)[ultra thick]node[above]{$\mathcal{O}_k$}; 
\draw (0, 0) -- (95: 3)node[above]{$\mathcal{O}_{k-1}$}; 
\draw (0, 0) -- (70: 3); 
\draw (0, 0) -- (35: 3.5); 
\draw (0, 0) -- (10: 3.5)node[right]{$\mathcal{O}_1$}; 
\draw (0, 0) -- (-10: 4)[ultra thick]node[right]{$\mathcal{O}_0$}; 
\draw (3, 0) node{$\mathcal{C}_1$}; 
\draw (2.5, 1) node{$\mathcal{C}_2$}; 
\draw (1.5, 2) node{$\cdots$}; 
\draw (0.3, 2.2) node{$\mathcal{C}_{k-1}$}; 
\draw (-0.8, 2.2) node{$\mathcal{C}_k$}; 
\end{tikzpicture}
\[k\geq2\]
\end{center}
\end{multicols}
\end{notation}

\begin{thm}\label{links}
Let $\mathcal{B}$ and $D^{\dag}_Z$ be notation as above. 
Let $D_Z$ be an $\mathbb{R}$-divisor on $Z$ with $D^{\dag}_Z-D_Z$ is ample. 

Then $\phi$ and $\psi$ are Mori fiber spaces, which are outputs of the $D_Z$-MMP, 
and $f_i$ are the result of running the $D_Z$-MMP. 
Moreover, if $D^{\dag}_Z$ is contained in more than two polytopes, 
then $\phi$ and $\psi$ are connected by a Sarkisov link. 
\end{thm}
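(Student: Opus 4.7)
The plan is to break the theorem into three claims and verify them in turn: that $\phi:X\to S$ and $\psi:Y\to T$ are Mori fiber spaces of $D_Z$, that each $f_i$ is the result of a run of the $D_Z$-MMP, and that under the stated hypothesis $\phi$ and $\psi$ are connected by a single Sarkisov link in the sense of the earlier definition.

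For the first claim, I would apply Lemma \ref{classification} to the pair $(\mathcal{C}_1,\mathcal{O}_0)$: since $\mathcal{O}_0$ lies in the boundary of $\mathcal{E}(\mathcal{B})$, only case $(1,b)$ is possible, producing a Mori fiber contraction $\phi:X\to S$ with $\dim S<\dim X$. Proposition \ref{span} yields $\mathbb{Q}$-factoriality of $X$ and Proposition \ref{picard} gives $\rho(X/S)=\dim\mathcal{C}_1-\dim\mathcal{O}_0=1$. To upgrade this to a Mori fiber space of $D_Z$ itself, I would verify that $-f_*D_Z$ is $\phi$-ample: since $\phi\circ f$ is the ample model of $D^{\dag}_Z$, the divisor $f_*D^{\dag}_Z\equiv\phi^*H_0$ is $\phi$-numerically trivial; writing $-f_*D_Z=-f_*D^{\dag}_Z+f_*(D^{\dag}_Z-D_Z)$ and intersecting with a general curve $C$ in a fiber of $\phi$, the first term vanishes and the second is positive by a standard common resolution computation using ampleness of $D^{\dag}_Z-D_Z$. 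The argument for $\psi$ is symmetric. For the second claim, for each index $i$ I would take a rational perturbation $D^*_Z$ in the relative interior of $\mathcal{A}_{f_i}$ close enough to $D^{\dag}_Z$ that $D^*_Z-D_Z$ remains ample; Lemma \ref{result} applied to $D^*_Z$ then gives $f_i$ as the result of a $D_Z$-MMP.

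For the third claim, the case $k=1$ (so $X=Y$ and $\phi,\psi$ are two distinct Mori fiber structures on a single variety) yields a trivial Type $(\mathrm{IV})$ link directly, with the sub-types $(\mathrm{IV}_m)$ versus $(\mathrm{IV}_s)$ distinguished by the $\mathbb{Q}$-factoriality of $R$. For $k\geq 2$ I would apply Lemma \ref{classification} to each interior wall $\mathcal{O}_i$ with $1\leq i\leq k-1$: since $\mathcal{O}_i$ is interior to $\mathcal{E}(\mathcal{B})$ and both adjacent chambers are two-dimensional, only case $(1,a,\mathrm{i})$ (a divisorial contraction between $X_i$ and $X_{i+1}$) or case $(2)$ (a flop) is possible. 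I would then examine the ample model $h:Z\dashrightarrow R$ of $D^{\dag}_Z$: since $D^{\dag}_Z$ is not big one has $\dim\mathcal{C}_h\leq 1$, and the $\mathbb{Q}$-factoriality of $R$ is detected by $\dim\mathcal{C}_h=1$ (combining Proposition \ref{span} with Lemma \ref{bertini}). Computing $\rho(X_i/R)$ via the relative Picard formula of Proposition \ref{picard} at each chamber should force any divisorial wall to be $\mathcal{O}_1$ or $\mathcal{O}_{k-1}$, so that every strictly middle wall is a flop; the resulting pattern of the two end walls, together with the $\mathbb{Q}$-factoriality of $R$, matches exactly one of the types $(\mathrm{I})$, $(\mathrm{II})$, $(\mathrm{III})$, $(\mathrm{IV}_m)$, $(\mathrm{IV}_s)$.

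The principal obstacle will be this last piece of bookkeeping, namely forcing every strictly middle wall to be a flop rather than a divisorial contraction, and confirming that the resulting diagram matches the definition shape-by-shape, including the correct identification of the common base $R$ with $S$, $T$, or a variety below both. These facts reduce to careful applications of Propositions \ref{span} and \ref{picard} to the local chamber structure at $D^{\dag}_Z$, but organising the analysis uniformly across the four link types is where the main work lies.
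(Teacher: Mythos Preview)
Your overall plan matches the paper's proof closely: set up the diagram via Lemma~\ref{classification}, use Lemma~\ref{result} to realise each $f_i$ as a result of the $D_Z$-MMP, use Proposition~\ref{picard} to bound $\rho(X_i/R)\leq 2$ and hence force every strictly middle wall to be a flop, and then match the end configurations to the four link types. Your explicit verification that $-f_*D_Z$ is $\phi$-ample is a welcome addition that the paper leaves implicit (though note a small slip: $\phi\circ f$ is not literally the ample model of $D^{\dag}_Z$, rather $D^{\dag}_Z\in\mathcal{C}_{\phi\circ f}$, which is what you actually need).

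There is, however, a genuine gap in your treatment of Type~(IV). You propose to distinguish (IV$_m$) from (IV$_s$) by whether $\dim\mathcal{C}_h=1$ or $0$, claiming this detects the $\mathbb{Q}$-factoriality of $R$ via Proposition~\ref{span}. But Proposition~\ref{span} only characterises $\mathbb{Q}$-factoriality when $g_i$ is \emph{birational}; here $D^{\dag}_Z$ is not big, so $h$ is not birational and the proposition says nothing. In fact in the Type~(IV) situation one has $R\neq S,T$, so $\mathcal{C}_h$ is not any of the edges $\mathcal{O}_0,\ldots,\mathcal{O}_k$, and hence $\mathcal{C}_h=\{D^{\dag}_Z\}$ regardless of whether $R$ is $\mathbb{Q}$-factorial. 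More seriously, you never exclude the possibility that $s:S\to R$ (or $t$) is a \emph{divisorial} contraction, which would fit neither (IV$_m$) nor (IV$_s$). The paper handles this with an argument lying outside the chamber bookkeeping: it pulls back the $s$-exceptional divisor to $X$, observes that $D^{\dag}_X$ is numerically trivial over $R$, and runs a $2$-ray game over $R$ (citing \cite[Chapter~6]{flips}) to produce a divisorial contraction followed by a Mori fibration from $X$, contradicting the already-established structure of the diagram. This step does not reduce to Propositions~\ref{span} and~\ref{picard}, so your final sentence understates what is needed; you should plan to invoke the $2$-ray game here.
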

\begin{proof} 
By Lemma \ref{classification}, we have the following commutative diagram: 
$$
\xymatrix{
X' =X_2\ar@{-->}[d]_p\ar@{-->}[rr] & & Y'=X_{k-1} \ar@{-->}[d]^q\\
X=X_1 \ar[d]_\phi & & Y=X_k \ar[d]^\psi\\ 
S \ar[rd]_s & & T \ar[ld]^t\\
& R &
}
$$
where $p$, $q$ and the horizontal arrow $X'\dasharrow Y'$ are birational 
and $\phi$ and $\psi$ have Mori fiber structures. 
Since $D^{\dag}_Z-D_Z$ is ample, for any $i$ we can take $D_i\in\mathcal{C}_i$ such that $D_i-D_Z$ is ample. 
By Lemma \ref{result}, $f_i$ is the result of running the $D_Z$-MMP. 
By Proposition \ref{picard}, there is a contraction $X_i\to R$ with $\rho(X_i/R)\leq 2$. 
If $\rho(X_i/R)=0$, then $f_i=h$ and this case does not happen. 
If $\rho(X_i/R)=1$, then $X_i\to R$ gives a Mori fiber structure. 
By Lemma \ref{classification}, $\dim\mathcal{C}_h=1$ 
and there is a facet of $\mathcal{C}_i$ contained in the boundary of $\mathcal{E}(V)$ and so $i=1$ or $k$. 
Therefore if $k\geq3$, then $\rho(X_i/R)=2$ for any $1<i<k$ 
and $X'\dasharrow Y'$ is connected by flops by Lemma \ref{classification} again. 
Moreover since $\rho(X'/R)=2$, $p$ is divisorial and $s$ is the identity, or $p$ is flop and $s$ is not the identity. 
For $q$ and $t$, similar conditions follow and there are only 7 possibilities below: 
\begin{itemize}
\item[(1)] $k=1$, 
\item[(2)] $k=2$, $\rho(X/R)=1$ and $\rho(Y/R)=2$, 
\item[(3)] $k=2$, $\rho(X/R)=2$ and $\rho(Y/R)=1$, 
\item[(4)] $k\geq 3$, $p$ and $q$ are divisorial, and $s$ and $t$ are the identities, 
\item[(5)] $k\geq 3$, $p$ divisorial, $q$ is flop, $s$ is the identity and $t$ is not the identity, 
\item[(6)] $k\geq 3$, $p$ is flop, $q$ is divisorial, $s$ is not the identity and $t$ is the identity, 
\item[(7)] $k\geq 3$, $p$ and $q$ are flops, and $s$ and $t$ are not the identities. 
\end{itemize}

In (1), $X=Y$ and this is a link of Type (IV). 
In (2), $s$ is the identity and $\rho(Y)\geq\rho(X)$. 
Then by Lemma \ref{classification}, there is a divisorial contraction $X'=Y\to X$. 
Thus this is a special case of a link of Type (I). 
In (3), this is similar to (2) and we obtain a special case of a link of Type (III).  
In (4), this is a link of Type (II). 
In (5), this is a link of Type (I). 
In (6), this is a link of Type (III). 
In (7), this is a link of Type (IV). 

The rest of the proof is 
that a link of Type (IV) is splitting into two types (IV$_m$) and (IV$_s$) in (1) and (7). 

We assume that $s$ is a divisorial contraction. 
Then there is a prime divisor $F$ on $S$ which is contracted by $s$. 
Since $\rho(X/S)=1$, there is a prime divisor $E$ on $X$ such that $mE=\phi^*F$ 
for some $m\in\mathbb{Z}_{\geq0}$. 
Since $D^\dag_X=f_*D^\dag_Z$ is numerically trivial over $R$, 
$\mathbb{B}(D^\dag_X+E/R)=E$. 
Since $\rho(X/R)=2$, by the $2$-ray game (cf.~\cite[Chapter 6]{flips}), 
there is a birational contraction  $X\dasharrow V\xrightarrow{f'}W\xrightarrow{g}U$ such that $f'$ is a divisorial contraction 
and $g$ has a Mori fiber space. 
In (1), this is a contradiction since $X=Y$, $\phi$ and $\psi$ are Mori fiber spaces and $\rho(X/R)=2$. 
In (7), $W=Y$ and $U=T$ and so this gives a link of Type (III) and this is a contradiction. 
Similarly, $t$ is not divisorial. 
Thus $s$ and $t$ are not divisorial. 

If $s$ has a Mori fiber structure, then $R$ is $\mathbb{Q}$-factorial and so $t$ has also a Mori fiber structure. 
Thus this is a link of Type (IV$_m$). 

If $s$ is a small contraction, 
then $R$ is not $\mathbb{Q}$-factorial and so $t$ is also small. 
Thus this is a link of Type (IV$_s$). 
\end{proof}

\begin{lem}\label{space}
Let $D_Z$ be a torus-invariant $\mathbb{R}$-divisor on $Z$. 
Let $f:Z\dasharrow X$ and $g:Z\dasharrow Y$ be the results of the $D_Z$-MMP. 
Let $\phi:X\to S$ and $\psi:Y\to T$ be two Mori fiber spaces 
which are outputs of the $D_Z$-MMP. 

Then we can find a two-dimensional convex polytope $\mathcal{B}\subset\mathcal{V}(Z)$, 
which is defined over $\mathbb{Q}$, 
with the following properties: 
\begin{itemize}
\item[(1)] $D'_Z-D_Z$ is ample for any $D'_Z\in\mathcal{E}(\mathcal{B})$, 
\item[(2)] $\mathcal{A}_{\phi\circ f}$ and $\mathcal{A}_{\psi\circ g}$ 
are not contained in the boundary of $\mathcal{B}$, 
\item[(3)] $\mathcal{C}_{f}$ and $\mathcal{C}_{g}$ are two-dimensional, 
\item[(4)] $\mathcal{C}_{\phi\circ f}$ and $\mathcal{C}_{\psi\circ g}$ are one-dimensional, 
\item[(5)] $L=\{D'_Z\in\mathcal{E}(\mathcal{B})\mid D'_Z \text{ is not big}\}$ is connected. 
\end{itemize}
\end{lem}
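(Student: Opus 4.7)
The plan is to build $\mathcal{B}$ as a small rational $2$-dimensional polytope lying in an affine $2$-plane $\mathcal{H} \subset \mathcal{V}(Z)$ translated off $D_Z$ in two ample directions, chosen so that the non-big boundary $\partial\mathcal{E}(\mathcal{V}(Z)) \cap \mathcal{B}$ visits both Mori fiber chambers $\mathcal{A}_{\phi\circ f}$ and $\mathcal{A}_{\psi\circ g}$.

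First I would produce torus-invariant ample $\mathbb{Q}$-divisors $A_1, A_2$ on $Z$ and thresholds $\tau_1, \tau_2 > 0$ such that $D_i := D_Z + \tau_i A_i$ lies on the boundary of $\mathcal{E}(\mathcal{V}(Z))$ and has ample model $\phi\circ f$ (for $i=1$) or $\psi\circ g$ (for $i=2$). This exploits the hypothesis that $\phi, \psi$ are Mori fiber outputs of the $D_Z$-MMP: on $X$ we have $\rho(X/S)=1$ with $-D_X = -f_*D_Z$ being $\phi$-ample, so an MMP-with-scaling argument provides $A_1$ with $D_X + \tau_1 A_{1,X} \sim_\mathbb{R} \phi^*H_S$ for some ample $H_S$ on $S$, ensuring $\phi\circ f$ is the ample model of $D_1$; symmetrically for $A_2$. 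Slightly perturbing $A_1, A_2$ by adding small multiples of an auxiliary ample, I may assume they are linearly independent in $N^1(Z)$ and that the affine plane $\mathcal{H} := D_Z + \operatorname{span}_\mathbb{R}(A_1, A_2)$ is in general position, so that by Lemma \ref{bertini} the conclusions of Propositions \ref{span} and \ref{picard} hold for rational polytopes in $\mathcal{H}$.

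Then I take $\mathcal{B}$ to be a sufficiently small rational $2$-dimensional polytope in $\mathcal{H}$ containing $D_1, D_2$ in its relative interior and entirely contained in the open region $\{D_Z + t_1 A_1 + t_2 A_2 : t_1 > 0, t_2 > 0\}$. Condition (1) is then automatic, since $t_1 A_1 + t_2 A_2$ is a positive combination of ample divisors. (2) holds because $D_i \in \mathcal{A}_{\phi\circ f}$ (resp.\ $\mathcal{A}_{\psi\circ g}$) lies in the relative interior of $\mathcal{B}$ by construction. (3) follows from Lemma \ref{result} and Proposition \ref{span}: slightly perturbing $D_i$ into the interior of $\mathcal{E}(\mathcal{B})$ yields a big divisor whose minimal model is $f$ (resp.\ $g$), so the chambers $\mathcal{C}_f, \mathcal{C}_g$ contain nonempty open subsets of $\mathcal{B}$ and are $2$-dimensional. (4) holds because Mori fiber ample models are non-big, so their chambers sit on the $1$-dimensional convex curve $\partial\mathcal{E}(\mathcal{V}(Z)) \cap \mathcal{H}$. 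Finally, for (5), $L$ equals the intersection of the convex region $\mathcal{E}(\mathcal{B})$ with this convex curve in $\mathcal{H}$, which is a single connected arc provided $\mathcal{B}$ is taken small enough.

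The main obstacle is the first step: producing ample divisors $A_1, A_2$ that realize the specific prescribed Mori fiber spaces $\phi\circ f, \psi\circ g$ as ample models of divisors of the form $D_Z + \tau_i A_i$, rather than some unrelated Mori fibrations that happen to appear on $\partial\mathcal{E}(\mathcal{V}(Z))$. This is where the hypothesis that $\phi, \psi$ are outputs of the $D_Z$-MMP (rather than arbitrary Mori fiber structures on birational models of $Z$) enters essentially, via MMP-with-scaling. A secondary general-position issue is arranging both chambers to appear in a single $2$-plane $\mathcal{H}$, which is handled by perturbing $A_1, A_2$ within the ample cone.
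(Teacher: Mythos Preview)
There is a genuine gap at precisely the point you flag as ``the main obstacle.'' Your claim that an MMP-with-scaling argument produces an ample $A_1$ on $Z$ with $D_X + \tau_1 f_*A_1 \sim_{\mathbb{R}} \phi^*H_S$ runs the logic backwards: MMP with scaling takes a \emph{given} ample divisor and outputs \emph{some} Mori fiber space, not the prescribed one. Concretely, for an ample $A_1$ on $Z$ there is no reason that $f_*A_1$ is $\phi$-ample on $X$ (it is only big and movable, and a curve in a fiber of $\phi$ could be a flipped curve of $f$), so the threshold $\tau_1$ need not exist; even if it does, $f$ need not remain $(D_Z+\tau_1 A_1)$-non-positive at that specific value of $\tau_1$, and the divisor induced on $S$ need not be ample. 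The paper's proof avoids this by \emph{constructing} the two directions explicitly rather than searching for them inside the ample cone: it chooses an ample $H'_{r+1}$ on $X$ in the linear system $|{-f_*\widetilde D_Z}+f_*H+\phi^*C|_{\mathbb{Q}}$ (which is nonempty precisely because $-f_*D_Z$ is $\phi$-ample), pulls it back to $H_{r+1}$ on $Z$, and uses $H+H_{r+1}$ as the direction. This guarantees by construction that $f_*(\widetilde D_Z+H+H_{r+1})$ is $\phi$-trivial with ample trace on $S$, which is exactly what you need and do not supply.

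A second gap is your invocation of Lemma~\ref{bertini}. That lemma requires an ambient polytope $\mathcal{P}$ spanning all of $N^1(Z)$, and then a general affine hyperplane slice; your $2$-plane $\mathcal{H}=D_Z+\operatorname{span}(A_1,A_2)$ has no such ambient polytope, and two directions cannot span $N^1(Z)$ once $\rho(Z)\geq 3$. The paper handles this by first building a full-dimensional simplex $\mathcal{B}_0$ generated by ample divisors $H_1,\dots,H_r$ spanning $N^1(Z)$ together with $H_{r+1},H_{r+2}$, and only then slicing by a general rational $2$-plane. Finally, your argument for (5) (``a single connected arc provided $\mathcal{B}$ is taken small enough'') is not correct as stated: the non-big locus is the boundary of a convex region in $\mathcal{H}$, and its intersection with a convex polytope can certainly be disconnected. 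The paper uses the structural fact that the preimage of $\overline{\mathrm{Eff}}(Z)$ in $\mathcal{V}(Z)$ has lineality space $V$ of codimension $\rho(Z)\geq 2$, so a generic $2$-plane avoids $V$; this is what forces connectedness, and it does not follow from smallness of $\mathcal{B}$ alone.
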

\begin{proof}
We take ample torus-invariant divisors $H_1, \ldots, H_r\geq0$, which generate $N^1(Z)$, 
and we put $H=H_1+\cdots+H_r$. 
By assumption, there are ample divisors $C$ and $D$ on $S$ and $T$, respectively, such that 
\[
-f_*D_Z+\phi^*C \text{ and } -g_*D_Z+\psi^*D
\]
are both ample. 
Let $\epsilon>0$ be a sufficiently small rational number. 
Then 
\[
-f_*D_Z+\epsilon f_*H+\phi^*C \text{ and } -g_*D_Z+\epsilon g_*H+\psi^*D
\]
are both ample and $f$ and $g$ are both $(D_Z+\epsilon H)$-negative. 
Replacing $H$ by $\epsilon H$, we may assume that $\epsilon=1$. 
We take a torus-invariant $\mathbb{Q}$-divisor $\widetilde{D}_Z$ on $X$ sufficiently close to $D_Z$. 
Then 
\[
-f_*\widetilde{D}_Z+f_*H+\phi^*C \text{ and } -g_*\widetilde{D}_Z+g_*H+\psi^*D
\]
are both ample and $f$ and $g$ are both $(\widetilde{D}_Z+H)$-negative. 
We take torus-invariant $\mathbb{Q}$-divisors $H'_{r+1}$ and $H'_{r+2}$ on $X$ and $Y$, respectively, such that 
\[
H'_{r+1}\in|-f_*\widetilde{D}_Z+f_*H+\phi^*C|_\mathbb{Q}
\text{ and }
H'_{r+2}\in|-g_*\widetilde{D}_Z+g_*H+\phi^*D|_\mathbb{Q}. 
\]
There are torus-invariant $\mathbb{Q}$-divisors $H_{r+1}$ and $H_{r+2}$ on $Z$ such that 
\[
H_{r+1}\sim_\mathbb{Q}f^*H'_{r+1}
\text{ and }
H_{r+2}\sim_\mathbb{Q}g^*H'_{r+2}. 
\]
Let $a>0$ be a sufficiently large rational number and we put a rational convex polytope
\[
\mathcal{B}_0=\left\{\widetilde{D}_Z+a\sum^{r+2}_{i=1} t_iH_i\middle|\sum^{r+2}_{i=1} t_i\leq1, t_i\geq0\right\}. 
\]
Possibly replacing $H_i$ by suitable ones, we may assume that (2) holds for $\mathcal{B}_0$. 

On the other hand, since $f$ is $(\widetilde{D}_Z+H+H_{r+1})$-negative 
and $(\phi\circ f)_*(\widetilde{D}_Z+H+H_{r+1})$ is ample, 
$\widetilde{D}_Z+H+H_{r+1}\in\mathcal{A}_{\phi\circ f}(\mathcal{B}_0)$. 
Similarly, $\widetilde{D}_Z+H+H_{r+2}\in\mathcal{A}_{\psi\circ g}(\mathcal{B}_0)$ 
and $f$ and $g$ are weak log canonical models of $\widetilde{D}_Z+H+H_{r+1}$ 
and $\widetilde{D}_Z+H+H_{r+2}$, respectively.
Thus $\widetilde{D}_Z+H+H_{r+1}\in\mathcal{C}_f(\mathcal{B}_0)$ 
and $\widetilde{D}_Z+H+H_{r+2}\in\mathcal{C}_g(\mathcal{B}_0)$. 

Let $\mathcal{H}_0$ be the translation by $\widetilde{D}_Z$ of the affine subspace generated by $H+H_{r+1}$ and $H+H_{r+2}$ 
and let $\mathcal{H}$ be a small perturbation of $\mathcal{H}_0$, which is defined over $\mathbb{Q}$. 
Putting $\mathcal{B}=\mathcal{B}_0\cap\mathcal{H}$, 
$\mathcal{B}$ satisfies (1) and (2). 
Since $\mathcal{B}_0$ spans $N^1(Z)$, $\mathcal{C}_{\phi\circ f}(\mathcal{B}_0)$ spans $\mathcal{B}_0$ 
and so by Lemma \ref{bertini}, $\mathcal{B}$ satisfies (3). 
By Proposition \ref{picard}, $\dim\mathcal{C}_{\phi\circ f}(\mathcal{B})
=\dim\mathcal{C}_{\psi\circ g}(\mathcal{B})=1$ and so $\mathcal{B}$ satisfies (4). 

Finally we see that we can take $\mathcal{B}$ satisfying (5). 
Since $\phi$ and $\psi$ are Mori fiber spaces, we may assume that $\rho(Z)\geq2$. 
There is a surjective linear map from $\mathcal{V}(Z)$ to $N^1(Z)$. 
Then the pullback of the pseudo-effective cone $\PE(Z)\subset N^1(Z)$ via this map 
is the convex polyhedron $\mathcal{P}$ containing a $(\dim Z)$-dimensional vector subspace $V$ 
since $\dim\mathcal V(Z)=\rho(Z)+\dim Z$. 
Then possibly replacing $H_i$ by suitable ones, 
we can take a two-dimensional rational convex polytope $\mathcal{B}$, 
which does not contan $V$, since $\codim V=\rho(Z)\geq2$. 
Thus $\mathcal{B}$ satisfies (5) as $\mathcal{E}(\mathcal{B})=\mathcal{B}\cap \mathcal{P}$. 
\end{proof}

\begin{proof}[Proof of Theorem \ref{main-theorem}]
We take a two-dimensional rational convex polytope $\mathcal{B}\subset\mathcal{V}(Z)$ given by Lemma \ref{space}. 
We take $D_0\in\mathcal{A}_{\phi\circ f}$ and $D_1\in\mathcal{A}_{\psi\circ g}$ 
belonging to the interior of $\mathcal{B}$. 
As $\mathcal{B}$ is two-dimensional, removing two points $D_0$ and $D_1$, 
the boundary of $\mathcal{E}(\mathcal{B})$ separates into two parts. 
Then one of the two parts of the boundary of $\mathcal{E}(\mathcal{B})$ is 
contained in $L$ by Lemma \ref{space} (5). 
Tracing this part from $D_0$ to $D_1$, 
we obtain finitely many points $D_i$ $(2\leq i\leq k)$ 
which are contained in rational polytopes of two-dimensional. 
By Theorem \ref{links}, each of $D_i$ gives a Sarkisov link 
and $\sigma$ is connected by these links. 
\end{proof}










\end{document}